\newcommand{\veps}{\varepsilon}
\newcommand{\tr}{\mathrm{tr}}
\newcommand{\ob}{\mathcal{OB}}
\newcommand{\diag}{\mathrm{diag}}
\newcommand{\re}{\mathrm{e}}
\newcommand{\ri}{\mathrm{i}}
\newcommand{\rep}{\mathrm{Re}}  
\newcommand{\imp}{\mathrm{Im}}  
\newcommand{\cg}[1]{#1^*}  
\newcommand{\fnorm}[1]{\left\|#1\right\|_\mathrm{F}}  
\newcommand{\tA}{\widetilde{A}}
\newcommand{\tB}{\widetilde{B}}
\newcommand{\tLambda}{\widetilde{\Lambda}}
\newcommand{\tlambda}{\tilde{\lambda}}
\renewcommand{\th}{\tilde{h}}
\newcommand{\tSigma}{\widetilde{\Sigma}}
\newcommand{\tX}{\widetilde{X}}
\newcommand{\tV}{\widetilde{V}}
\newcommand{\tH}{\widetilde{H}}
\newcommand{\bbC}{\mathbb{C}}
\newcommand{\bbR}{\mathbb{R}}
\newcommand{\calI}{\mathcal{I}}
\newcommand{\dmin}{d_{\min}}
\newcommand{\dmax}{d_{\max}}
\newcommand{\lmin}{\lambda_{\min}}
\newcommand{\lmax}{\lambda_{\max}}
\title{On the continuity of Schur-Horn mapping}
\author{Hengzhun Chen\thanks{School of Mathematical Sciences, Fudan
University (\email{hengzhunchen21@m.fudan.edu.cn}).}
\and Yingzhou Li\thanks{School of Mathematical Sciences, Fudan University;
Shanghai Key Laboratory for Contemporary Applied Mathematics, Fudan
University (\email{yingzhouli@fudan.edu.cn}).} }
\begin{document}

\maketitle

\begin{abstract}
    The Schur-Horn theorem is a well-known result that characterizes the
    relationship between the diagonal elements and eigenvalues of a
    symmetric (Hermitian) matrix. In this paper, we extend this theorem by
    exploring the eigenvalue perturbation of a symmetric (Hermitian)
    matrix with fixed diagonals, which is referred to as the continuity of
    the Schur-Horn mapping. We introduce a concept called strong
    Schur-Horn continuity, characterized by minimal constraints on the
    perturbation. We demonstrate that several categories of matrices
    exhibit strong Schur-Horn continuity. Leveraging this notion, along
    with a majorization constraint on the perturbation, we prove the
    Schur-Horn continuity for general symmetric (Hermitian) matrices. The
    Schur-Horn continuity finds applications in oblique manifold
    optimization related to quantum computing.
\end{abstract}

\begin{keywords}
Schur-Horn, oblique manifold, mapping continuity, qOMM.
\end{keywords}

\section{Introduction}

Schur-Horn theorem was established in the mid-20th century. It continues
to find applications and inspire advancements in various fields of
mathematics and physics. Specifically, the characterization of eigenvalues
and matrix diagonal entries continues to stimulate further research,
driving advancements in quantum information theory, quantum optics,
quantum metrology, spectral graph theory, convex optimization, and
majorization theory. In this paper, we study the continuity of Schur-Horn
mapping, which is adopted for our energy landscape analysis of objective
functions in quantum computing~\cite{chen2025landscape}. 

Schur-Horn theorem is composed of two parts, as proved by Schur and Horn.
We start by defining the condition known as the \emph{majorization}. 

\begin{definition}[Majorization]
    Given vector $x \in \bbR^n$, notation $x^{\uparrow}$ denotes the
    reordered vector of $x$ with entries in non-decreasing order. Let
    $a$ and $\lambda$ be two vectors in $\bbR^n$. Vector $\lambda$ is
    majorized by $a$, denoted as $\lambda \prec a$, if 
    \begin{equation} \label{eq:major}
        \sum_{i=1}^k \lambda^{\uparrow}_{i} \leq \sum_{i=1}^k
        a^{\uparrow}_{i}, \quad k=1, \cdots, n-1; \quad
        \text{and }
        \sum_{i=1}^n \lambda^{\uparrow}_{i} = \sum_{i=1}^n
        a^{\uparrow}_{i}.
    \end{equation}        
\end{definition}
The majorization relation in \eqref{eq:major} is equivalent to another
commonly used definition in the literature, i.e., a vector $a$ is
majorized by $\lambda$, if
\begin{equation*}
    \sum_{i=1}^k a^{\downarrow}_{i} \leq \sum_{i=1}^k
    \lambda^{\downarrow}_{i}, \quad k=1, \cdots, n-1; \quad
    \text{and }
    \sum_{i=1}^n a^{\downarrow}_{i} = \sum_{i=1}^n
    \lambda^{\downarrow}_{i},
\end{equation*}        
where notation $x^{\downarrow}$ denotes the reordered vector of $x$ with
entries in non-increasing order. Then Schur-Horn theorem states:
\begin{itemize}
    \item (Schur~\cite{schur1923uber}) Let $H$ be a Hermitian matrix with
    eigenvalues $\lambda=(\lambda_i)_{1\leq i \leq n}$ and diagonal
    entries $a=(a_{ii})_{1\leq i \leq n}$, then $\lambda \prec a$;

    \item (Horn~\cite{horn1954doubly}) If $a, \lambda \in \bbR^n$ satisfy
    $\lambda \prec a$, then there exists a symmetric (Hermitian) matrix
    $H$ whose diagonal entries are $a$ and eigenvalues are $\lambda$. 
\end{itemize}

Schur-Horn theorem has been applied in various fields. In the realm of
quantum optics and quantum state engineering, the Schur-Horn theorem has
been applied to design and manipulate desired quantum states
\cite{mari2014quantum, styliaris2019mixing}. As for convex optimization,
Schur-Horn theorem has implications in convex relaxations for graph and
inverse eigenvalue problems \cite{candogan2018finding}. The theorem
provides constraints on the eigenvalues of positive semidefinite matrices,
enabling the formulation of optimization problems with eigenvalue
constraints and facilitating the development of efficient algorithms for
solving such problems. 

There has been a rich history in proving the Schur-Horn theorem,
specifically the Horn part. In general, proofs could be grouped into
nonconstructive ones~\cite{chu1995construct, horn1954doubly, LEITE1999149}
and constructive ones~\cite{CHAN1983562, zha1995note}.
Chu~\cite{chu1995construct} utilized an optimization-based limiting
process to prove the existence of the matrix in the Horn part. Leite et
al.~\cite{LEITE1999149} gave an algebraic proof, which could be extended
to analogous results for skew-symmetric matrices as well. Constructive
proofs~\cite{CHAN1983562, zha1995note} were based on Givens rotation and
could be viewed as an algorithm for constructing $H$ in Horn part given
$a$ and $\lambda$ satisfying the majorization condition. Generalizations
of constructive algorithms  
can be found in~\cite{davies2000numerically, dhillon2005generalized}.
Recently, Matthew Fickus et al.~\cite{fickus2013constructing} proposed
an algorithm based on the finite frame theory to procedure every example
of the matrix in the Horn part.

\subsection{Contribution}

Schur-Horn theorem establishes the connections among diagonal entries,
eigenvalues, and a symmetric matrix. In the following, we first define a
Schur-Horn mapping based on the Schur-Horn theorem and then prove the
continuity of the mapping.

Given a target diagonal vector $d \in \bbR^n$, we define two sets of
matrices~\footnote{For the sake of notation, we adopt $\diag(\cdot)$
similar to the MATLAB ``diag'' function, i.e., $\diag(v)$ is a square
diagonal matrix with the entries of vector $v$ on the diagonal and
$\diag(A)$ is a column vector of the diagonal entries of $A$.}
\begin{equation*}
    S_d = \left\{ \Lambda \in \bbR^{n\times n} : \Lambda
    \text{ diagonal }, \diag(\Lambda) \prec d \right\}
\end{equation*}
and 
\begin{equation*}
    M_d = \left\{ A\in \bbR^{n\times n} : \diag(A)=d, A = A^\top \right\}.
\end{equation*}
One can define an equivalence relation between two matrices $A_1$ and
$A_2$ over $M_d$ as 
\begin{equation} \label{eq:equivalence-relation}
    A_1 \sim A_2 \text{ if } A_1, A_2 \text{ have the same
    eigenvalues.}
\end{equation}
Then, we can define a mapping between $S_d$ and the quotient space of
$M_d$ with the equivalence relation~\eqref{eq:equivalence-relation}, whose
existence is guaranteed by the Schur-Horn theorem,
\begin{equation} \label{def:SHmapping}
    \begin{split}
        F : \quad & S_d \rightarrow M_d / \sim \\
        & \Lambda \mapsto [Q \Lambda Q^\top], \quad Q
        \text{ is an orthogonal matrix such that } \diag(Q \Lambda Q^\top) = d.
    \end{split}    
\end{equation}
The mapping $F$ is called the Schur-Horn mapping. Furthermore, we
introduce the Hausdorff distance with the Frobenius norm $\fnorm{\cdot}$,
i.e.,
\begin{equation} \label{def:Hausdorff_dist}
    d_\mathrm{H}([A_1], [A_2]) := \max\left\{ \sup_{X\in [A_1]}
    \inf_{Y\in [A_2]} \fnorm{X - Y}, \sup_{Y\in [A_2]}
    \inf_{X\in [A_1]} \fnorm{X - Y} \right\}.
\end{equation}
This Hausdorff distance measures the distance between two elements in
$M_d/\sim$. We also remark that actually the $\sup$ and $\inf$ in
\eqref{def:Hausdorff_dist} can be replaced by $\max$ and $\min$
respectively since $[A_1]$ and $[A_2]$ are both compact sets in $M_d$ and
$f(Y) = \fnorm{X-Y}$ and $g(X) = \min_{Y \in [A_2]} \fnorm{X-Y}$ are
continuous functions. Indeed, Hausdorff distance is a metric over the set
of compact subsets~\cite{viroelementary}. With the Hausdorff distance being 
a properly defined metric in $M_d/\sim$, we can claim the continuity of
the Schur-Horn mapping $F(\cdot)$, i.e., if $\Lambda_1, \Lambda_2 \in S_d$
are sufficiently close, then $F(\Lambda_1), F(\Lambda_2)$ can be close enough
under the Hausdorff distance. Rigorously, we will first establish a
perturbative analysis for $F(\cdot)$ in \cref{thm:continuitySH} and then
state the continuity of the Schur-Horn mapping in \cref{cor:continuitySH},
which are the main contributions of this paper.  

For ease of expression and reference later, we introduce the definition of
Schur-Horn continuity as follows.

\begin{definition}[Schur-Horn Continuity] \label{def:SHcont}
    Suppose $A$ is an $n$-by-$n$ real symmetric (complex Hermitian) matrix
    with eigenvalues $\lambda \in \bbR^n$ and diagonal entries $d \in
    \bbR^n$. Let $\tlambda$ be a perturbation of $\lambda$ such that
    $\|\lambda - \tlambda\|_2 = O(\veps)$ for $\veps > 0$ sufficiently
    small and $\tlambda \prec d$. Then, there exists a real symmetric
    (complex Hermitian) matrix $\tB$ with eigenvalues $\tlambda$ and
    diagonal entries $d$ such that $\fnorm{A - \tB} = O(\veps^{1/2})$.
\end{definition}

\begin{theorem} \label{thm:continuitySH}
    Any symmetric matrix $A \in \bbR^{n \times n}$ is Schur-Horn
    continuous.
\end{theorem}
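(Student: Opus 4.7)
The plan is to use the spectral decomposition of $A$ to first produce a symmetric matrix with the correct target eigenvalues, and then to apply strong Schur--Horn continuity to correct its diagonal. Write $A = Q \Lambda Q^\top$ with $\Lambda = \diag(\lambda)$, and set $A' := Q \tLambda Q^\top$ where $\tLambda = \diag(\tlambda)$. Then $A'$ is symmetric with spectrum $\tlambda$ and
\begin{equation*}
    \fnorm{A - A'} = \fnorm{Q(\tLambda - \Lambda)Q^\top} = \|\lambda - \tlambda\|_2 = O(\veps).
\end{equation*}
In particular, its diagonal satisfies $\diag(A') = d + \eta$ for some perturbation $\eta$ with $\|\eta\|_2 = O(\veps)$.

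Next I would invoke strong Schur--Horn continuity on $A'$ to push its diagonal from $d + \eta$ to exactly $d$ while preserving its eigenvalues $\tlambda$. Since $\tlambda \prec d$ by hypothesis and $A'$ already realises $\tlambda \prec d + \eta$, we are in the regime where such a strong-continuity result is intended to apply: the diagonal correction to be absorbed is small, and both the starting and the target diagonals are compatible (via majorization) with the fixed spectrum $\tlambda$. The matrix $\tB$ produced in this step has eigenvalues $\tlambda$, diagonal $d$, and satisfies $\fnorm{A' - \tB} = O(\|\eta\|^{1/2}) = O(\veps^{1/2})$. Combining with the first step via the triangle inequality yields the desired bound $\fnorm{A - \tB} = O(\veps) + O(\veps^{1/2}) = O(\veps^{1/2})$.

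The main obstacle is the second step, since the $A'$ produced above is a generic symmetric matrix and may not lie in any of the categories for which strong Schur--Horn continuity has been established directly. I would handle this by splitting $A'$ according to a suitable invariant structure---for instance grouping coordinates according to the blocks where the majorization inequalities $\tlambda \prec d$ are tight versus slack---so that within each block the relevant strong-continuity result can be invoked, and then recombining the pieces while carefully tracking the accumulated Frobenius error. The $\veps^{1/2}$ exponent is unavoidable in general: the elementary example $A = \diag(\lambda_1, \lambda_2)$ with $d = (\lambda_1, \lambda_2)$ and $\tlambda = (\lambda_1 - \veps, \lambda_2 + \veps)$ forces any admissible $\tB$ to have off-diagonal entries of size $\sqrt{(\lambda_2 - \lambda_1)\veps + \veps^2}$, so the best the argument can hope to achieve is to match this rate in the presence of arbitrarily many simultaneous near-degeneracies.
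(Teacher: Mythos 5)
Your first step is sound, and your closing observation about the $\veps^{1/2}$ rate being forced by the $2\times2$ example matches the paper's own remark. But the second step, which you correctly flag as the crux, is not a proof, and the way you have set it up has a conceptual problem that would need untangling before the gap can even be attacked.

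Strong Schur--Horn continuity (Definition \ref{def:strongSHcont}) is a property of the matrix $A$ whose eigenvalues are being perturbed while its diagonal is held fixed: given $\tLambda$ with the same trace, it produces $\tB=G_2 Q G_1 \tLambda G_1^\top Q^\top G_2^\top$ with $\diag(\tB)=\diag(A)$. It is not a ``push the diagonal of a given matrix to a target'' operation, so you cannot ``invoke it on $A'$'' in the sense you describe. If $A$ itself were strongly Schur--Horn continuous you would be done in one line with no need for $A'$; the whole difficulty is precisely that $A$ may fail to be strongly Schur--Horn continuous (e.g.\ $A=\diag(1,2)$, Remark after Corollary \ref{cor:strongSH-SH}), and by Proposition \ref{prop:strongSH-major} that failure is tied to tight majorization inequalities between $\lambda$ and $d$. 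Your proposed decomposition — grouping coordinates by where $\tlambda\prec d$ is tight versus slack — points at the right phenomenon, but it is the wrong invariant to decompose by: the inequalities you need to track are those of the \emph{original} pair $(\lambda,d)$, and what actually controls them is the irreducible block structure of $A$. The paper's Lemma \ref{lem:block_diag_sSH} makes this precise by permuting $A$ into $\diag(A_1,\dots,A_p)$ where each $A_i$ is a scalar or a non-scalar strongly Schur--Horn continuous block and the block spectra are nested in order.

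Even granting that decomposition, the remaining work — redistributing the block-wise trace errors $h_i(\veps)=\tr(\tLambda_i)-\tr(A_i)$ across blocks using $2\times2$ Givens rotations controlled by a priority-queue argument, keeping every intermediate step $O(\veps^{1/2})$ via the case analysis of Lemma \ref{lem:givens}, and only then invoking strong Schur--Horn continuity \emph{within each block} — is the bulk of the paper's proof and is not addressed by ``carefully tracking the accumulated Frobenius error.'' The block trace-balancing is where the $\veps^{1/2}$ loss actually enters (Case 2 of Table \ref{tab:givens}), and it requires the ordering \eqref{eq:lem_block_diag_sSH} together with the strictness from Proposition \ref{prop:strongSH-major} to guarantee $\lmin(A_i)<\lmax(A_j)$ so that Lemma \ref{lem:givens} is applicable. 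Without these ingredients the argument does not close.
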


From \cref{thm:continuitySH}, one could easily deduce the continuity of
the Schur-Horn mapping as the following corollary.

\begin{corollary} \label{cor:continuitySH}
    Schur-Horn mapping $F$ is a continuous mapping from $S_d$ to $M_d/\sim$
    with Hausdorff distance $d_H$.    
\end{corollary}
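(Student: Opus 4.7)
This corollary should follow almost directly from \cref{thm:continuitySH}, so the plan is mostly careful bookkeeping. Fix $\Lambda_1, \Lambda_2 \in S_d$ with $\fnorm{\Lambda_1 - \Lambda_2} \leq \veps$. Since both are diagonal, this reads $\|\diag(\Lambda_1) - \diag(\Lambda_2)\|_2 \leq \veps$, and by the standard ``sorting'' inequality the same bound persists after replacing each vector by its nondecreasing rearrangement. I will estimate the two one-sided distances comprising $d_{\mathrm{H}}(F(\Lambda_1), F(\Lambda_2))$ separately.

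First, pick any representative $X \in F(\Lambda_1)$. By construction $X$ is real symmetric with $\diag(X) = d$ and its spectrum, as a multiset, is the set of entries of $\Lambda_1$. I will apply \cref{thm:continuitySH} to $A = X$, taking $\lambda = \diag(\Lambda_1)^\uparrow$ as an ordered listing of the eigenvalues of $X$ and $\tlambda = \diag(\Lambda_2)^\uparrow$ as the perturbed target spectrum. The required hypotheses hold: $\tlambda \prec d$ because $\Lambda_2 \in S_d$, and $\|\lambda - \tlambda\|_2 \leq \veps$ by the sorting inequality mentioned above. The theorem then produces a symmetric $\tB$ with eigenvalues $\tlambda$ and diagonal $d$, i.e. $\tB \in F(\Lambda_2)$, satisfying $\fnorm{X - \tB} = O(\veps^{1/2})$. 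Consequently $\sup_{X \in F(\Lambda_1)} \inf_{\tB \in F(\Lambda_2)} \fnorm{X - \tB} = O(\veps^{1/2})$.

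The other one-sided distance in $d_{\mathrm{H}}$ is bounded by the same argument with the roles of $\Lambda_1$ and $\Lambda_2$ interchanged, since the hypotheses of \cref{thm:continuitySH} are symmetric. Combining the two bounds yields $d_{\mathrm{H}}(F(\Lambda_1), F(\Lambda_2)) = O(\veps^{1/2}) \to 0$ as $\veps \to 0$, proving continuity of $F$; in fact the argument delivers Hölder continuity with exponent $1/2$.

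The only delicate point—hardly a real obstacle—is that \cref{thm:continuitySH} asks for an \emph{ordered} perturbation vector $\tlambda$, whereas the spectrum of a representative $X$ in the equivalence class is intrinsically unordered. Rearranging both diagonals in nondecreasing order before invoking the theorem resolves this ambiguity simultaneously for both sides and, by the sorting inequality, ensures $\|\lambda - \tlambda\|_2 \leq \fnorm{\Lambda_1 - \Lambda_2}$, so no constants are lost in the reduction.
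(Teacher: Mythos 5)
Your proof is correct and follows essentially the same route as the paper's: apply \cref{thm:continuitySH} to an arbitrary representative $X$ of $F(\Lambda_1)$ with the perturbed target spectrum coming from $\Lambda_2$, obtain a nearby $\tB \in F(\Lambda_2)$, conclude a one-sided bound, and symmetrize to bound the Hausdorff distance. The only cosmetic difference is that you spell out the sorting step justifying $\|\lambda - \tlambda\|_2 \leq \fnorm{\Lambda_1 - \Lambda_2}$, whereas the paper instead invokes compactness of $[A_1]$ and continuity of $g(X)=\min_{Y\in[A_2]}\fnorm{X-Y}$ to pass from a pointwise bound to the supremum over $X$; that compactness remark is worth including, since the constant implicit in \cref{thm:continuitySH} may a priori depend on the representative $X$.
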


\begin{proof}
    Given $\Lambda_1, \Lambda_2 \in S_d$ such that $\fnorm{\Lambda_1 -
    \Lambda_2} = O(\veps)$, denote $[A_1] = F(\Lambda_1)$, $[A_2] =
    F(\Lambda_2)$. According to \cref{thm:continuitySH} we have
    $\min_{Y\in[A_2]}\fnorm{X-Y}=O(\veps^{1/2})$. Note that $g(X) =
    \min_{Y \in [A_2]} \fnorm{X-Y}$ is a continuous function and $[A_1]$
    is a compact set, it yields that 
    \begin{equation*}
        \max_{X\in[A_1]} \min_{Y\in [A_2]} \fnorm{X - Y} = O(\veps^{1/2}).
    \end{equation*}
    Similarly, we have 
    \begin{equation*}
        \max_{Y\in [A_2]} \min_{X\in [A_1]} \fnorm{X - Y}=O(\veps^{1/2}). 
    \end{equation*} 
    Thus, from the definition of Hausdorff distance
    \eqref{def:Hausdorff_dist} we obtain the continuity of Schur-Horn
    mapping.   
\end{proof}

\cref{cor:continuitySH} establishes continuity of the Schur--Horn
mapping $F$ in a set-valued sense on the quotient space $M_d / \sim$,
measured by the Hausdorff distance. While this is a natural and
sufficient notion of continuity for the mapping itself, many
perturbative arguments in constrained optimization require more detailed
information: given a representative matrix $A$ and a small perturbation
of its eigenvalues, one needs to construct an explicit nearby
representative $\tB$ with the same diagonal satisfying other constraints
in the constrained optimization problem, rather than merely asserting
that the two equivalence classes are close. This is particularly
relevant for local saddle-point perturbations on manifolds (see, e.g.,
\cref{app:qomm_application}). To capture this constructive, local
refinement, we introduce in \cref{sec:strongSHcont} the notion of
\emph{strong Schur--Horn continuity}.

For some applications where matrices are Hermitian, we can still define
the Schur-Horn mapping and prove its continuity. Consider 
\begin{equation*}
    N_d = \left\{ A\in \bbC^{n\times n} : \diag(A)=d, A = \cg{A} \right\},
\end{equation*}
we can define Schur-Horn mapping for Hermitian scenario as 
\begin{equation} \label{def:SHmapping_hermit}
    \begin{split}
        F : \quad & S_d \rightarrow N_d / \sim \\
        & \Lambda \mapsto [Q \Lambda \cg{Q}], \quad Q
        \text{ is a unitary matrix such that } \diag(Q \Lambda \cg{Q}) = d.
    \end{split}    
\end{equation}
Here, we abuse the notation $F$ to denote the Schur-Horn mapping for
Hermitian matrices. Then we have similar results as follows.
\begin{theorem} \label{thm:continuitySH_hermit}
    Any Hermitian matrix $A\in \bbC^{n\times n}$ is Schur-Horn continuous.
\end{theorem}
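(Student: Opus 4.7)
My plan is to reduce the Hermitian statement to the real symmetric statement \cref{thm:continuitySH} through the standard $2n\times 2n$ real embedding of Hermitian matrices. Writing $A = A_R + \ri A_I$ with $A_R = \rep(A)$ real symmetric (so $\diag(A_R) = d$) and $A_I = \imp(A)$ real antisymmetric (so $\diag(A_I) = 0$), I would associate to $A$ the block matrix
\begin{equation*}
    \widehat{A} = \begin{pmatrix} A_R & -A_I \\ A_I & A_R \end{pmatrix} \in \bbR^{2n\times 2n}.
\end{equation*}
Then $\widehat{A}$ is real symmetric, has diagonal $(d,d)\in\bbR^{2n}$, and has eigenvalues $\lambda$ each doubled in multiplicity. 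Moreover, for any Hermitian $C$ with associated embedding $\widehat{C}$, one has $\fnorm{\widehat{A}-\widehat{C}} = \sqrt{2}\,\fnorm{A-C}$, so quantitative estimates on $\widehat{\,\cdot\,}$ images transfer back to the original matrices up to the factor $\sqrt 2$.

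First I would check that the doubled perturbation $(\tlambda,\tlambda)\in\bbR^{2n}$ is majorized by $(d,d)$ and satisfies $\|(\lambda,\lambda)-(\tlambda,\tlambda)\|_2 = \sqrt 2\,\|\lambda-\tlambda\|_2 = O(\veps)$. The majorization claim follows by sorting and pairing entries: for even indices $2k$ the partial sums of the doubled vectors reduce directly to the $k$-th majorization inequality $\tlambda \prec d$, while for odd indices $2k+1$ the required inequality is obtained by averaging the $k$-th and $(k+1)$-th majorization inequalities for $\tlambda \prec d$. I would then invoke \cref{thm:continuitySH} on $\widehat{A}$ with this doubled perturbation, obtaining a real symmetric $\widehat{B}\in\bbR^{2n\times 2n}$ with $\diag(\widehat{B}) = (d,d)$, spectrum $(\tlambda,\tlambda)$, and $\fnorm{\widehat{A}-\widehat{B}} = O(\veps^{1/2})$. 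Provided $\widehat{B}$ inherits the block form $\begin{pmatrix} B_R & -B_I \\ B_I & B_R\end{pmatrix}$, the Hermitian matrix $\tB := B_R + \ri B_I$ satisfies $\diag(\tB)=d$, has eigenvalues $\tlambda$, and $\fnorm{A-\tB} = \fnorm{\widehat{A}-\widehat{B}}/\sqrt 2 = O(\veps^{1/2})$, closing the argument.

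The main obstacle is guaranteeing that $\widehat{B}$ can be chosen to commute with $J = \begin{pmatrix} 0 & -I_n \\ I_n & 0 \end{pmatrix}$, as \cref{thm:continuitySH} only asserts existence of \emph{some} real symmetric $\widehat{B}$. I would resolve this by inspecting the proof of \cref{thm:continuitySH}: the construction of $\widehat{B}$ from $\widehat{A}$ is carried out by equivariant operations (Givens-type rotations and majorization-preserving perturbations), which act compatibly on the orthogonal splitting $\bbR^{2n}\simeq\bbR^n\oplus\bbR^n$, so when the input $\widehat{A}$ commutes with $J$ the output $\widehat{B}$ can be produced in a way that also commutes with $J$. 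As a cleaner alternative for the writeup, one can simply transcribe the entire proof of \cref{thm:continuitySH} in the complex Hermitian setting, replacing every real orthogonal transformation by its unitary analog, every transpose by the conjugate transpose $\cg{(\cdot)}$, and invoking the Hermitian Horn realization in place of the real one; every Frobenius and spectral norm estimate used there carries over verbatim, which is why I expect the same $O(\veps^{1/2})$ rate to hold in the complex setting without any loss.
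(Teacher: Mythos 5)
Your embedding observations are all correct: $\widehat{A}$ is real symmetric with diagonal $(d,d)$, has doubled spectrum, the Frobenius norm scales by $\sqrt{2}$, and $(\tlambda,\tlambda)\prec(d,d)$ follows from $\tlambda\prec d$ by the even/odd pairing argument you give. The gap you flag is, however, genuine and you do not close it: \cref{thm:continuitySH} only guarantees \emph{some} real symmetric $\widehat{B}$ with the right diagonal, spectrum, and distance, and there is no reason it should commute with $J$. Your proposed remedy---that the Givens rotations in the proof of \cref{thm:continuitySH} ``act compatibly'' with the splitting and can therefore be produced $J$-equivariantly---is asserted but not argued, and it is not obvious. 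A single real Givens rotation between indices $i,j\leq n$ does not commute with $J$; to stay $J$-equivariant one must either simultaneously apply the same rotation between $(i+n,j+n)$ or mix cross-block index pairs, and the priority-queue procedure in the proof corrects one diagonal entry at a time with no such pairing. Post-hoc symmetrization does not rescue this: $\frac{1}{2}(\widehat{B}+J\widehat{B}J^\top)$ commutes with $J$ and keeps the diagonal $(d,d)$, but averaging does not preserve the spectrum. There is also a structural incompatibility upstream: the block decomposition from \cref{lem:block_diag_sSH} applied to $\widehat{A}$ need not respect $J$ (for instance, if $A$ is real then $\widehat{A}$ already splits into two disconnected copies of $A$, and the procedure treats the halves independently). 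Making this route rigorous would mean re-proving \cref{thm:continuitySH} under the $J$-equivariance constraint, which removes its appeal as a reduction.

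Your ``cleaner alternative'' is exactly what the paper does, and it is the intended proof. But ``carries over verbatim'' somewhat understates the one new ingredient: \cref{lem:givens} must be generalized to complex Givens rotations, which the paper does in \cref{lem:givens_complex}. When $b_{12}\neq 0$ but $\rep(b_{12})=0$, a real Givens rotation produces no linear term in the quadratic for $\tan\theta$, so the Case 1 estimate fails; the fix is to choose the phase parameters $\phi=\pi/2$, $\psi=0$ in the complex Givens rotation, so that $-2cs\,\imp(b_{12})$ plays the role of $2cs\,b_{12}$ and the same analysis goes through. Once that lemma is in place, the block-majorization argument and the strong Schur-Horn continuity lemmas do transfer by replacing transposes with conjugate transposes and orthogonal matrices with unitary ones, as you say.
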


\begin{corollary} \label{cor:continuitySH_hermit}
    Schur-Horn mapping $F$ is a continuous mapping from $S_d$ to $N_d/\sim$
    with Hausdorff distance $d_H$.    
\end{corollary}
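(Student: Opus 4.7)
The plan is to mirror the proof of \cref{cor:continuitySH} almost verbatim, with complex conjugate transpose replacing the transpose and $N_d$ replacing $M_d$. Given $\Lambda_1, \Lambda_2 \in S_d$ with $\fnorm{\Lambda_1 - \Lambda_2} = O(\veps)$, let $[A_1] = F(\Lambda_1)$ and $[A_2] = F(\Lambda_2)$ be the corresponding equivalence classes in $N_d/\sim$. Fix an arbitrary representative $X \in [A_1]$; then $X$ is Hermitian, $\diag(X) = d$, and its eigenvalues are $\diag(\Lambda_1)$. Since $\diag(\Lambda_2) \prec d$ by assumption and $\|\diag(\Lambda_1) - \diag(\Lambda_2)\|_2 \le \fnorm{\Lambda_1 - \Lambda_2} = O(\veps)$, applying \cref{thm:continuitySH_hermit} to $X$ produces a Hermitian $\tB$ with diagonal $d$ and eigenvalues $\diag(\Lambda_2)$ such that $\fnorm{X - \tB} = O(\veps^{1/2})$. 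Because $\tB \in [A_2]$, this yields the pointwise bound $\min_{Y \in [A_2]} \fnorm{X - Y} = O(\veps^{1/2})$.

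Next I would promote this pointwise bound to a supremum over $X \in [A_1]$ using exactly the compactness-plus-continuity argument from the symmetric case. The function $g(X) = \min_{Y \in [A_2]} \fnorm{X - Y}$ is continuous by the triangle inequality, and $[A_1]$ is compact since it is a closed subset of the sphere in $\bbC^{n\times n}$ of Frobenius radius $\|\diag(\Lambda_1)\|_2$. Hence $\max_{X \in [A_1]} g(X) = O(\veps^{1/2})$, and swapping the roles of $[A_1]$ and $[A_2]$ gives the same bound in the other direction. Plugging both estimates into the definition \eqref{def:Hausdorff_dist} yields $d_{\mathrm{H}}([A_1], [A_2]) = O(\veps^{1/2})$, which is exactly continuity of $F$ from $S_d$ to $N_d/\sim$.

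There is essentially no obstacle here beyond bookkeeping: the one thing worth verifying is that the topological ingredients transfer from $\bbR^{n\times n}$ to $\bbC^{n\times n}$. They do, since $[A_2]$ is the intersection of the unitary orbit $\{U \Lambda_2 \cg{U} : U \in U(n)\}$ with the closed affine slice $\{A \in \bbC^{n \times n} : \diag(A) = d, A = \cg{A}\}$, and this intersection is compact because $U(n)$ is compact and the slice is closed. With \cref{thm:continuitySH_hermit} already in hand, no new idea beyond the real-symmetric proof is needed, and the entire argument reduces to reading off that theorem and reusing the compactness lemma.
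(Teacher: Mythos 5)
Your proposal is correct and follows exactly the approach the paper uses for the real-symmetric analogue, \cref{cor:continuitySH}: apply the Hermitian Schur-Horn continuity theorem pointwise, then upgrade to a uniform bound via compactness of the equivalence class and continuity of the distance function, then symmetrize to conclude in Hausdorff distance. The paper omits the Hermitian proof precisely because it is this straightforward adaptation, and your one addition—verifying compactness of $[A_2]$ as the intersection of a compact unitary orbit with the closed slice $N_d$—is a correct and welcome filling-in of a detail the paper took for granted.
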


\subsection{Applications}

The continuity property of the Schur-Horn mapping is useful in analyzing
the manifold optimization problems. For example, consider the landscape
analysis of an objective function over the oblique manifold. The descent
direction of such a problem has to incorporate the manifold information,
and the perturbative analysis of a stationary point on the manifold
directly links to the continuity of the Schur-Horn mapping. We provide a
concrete application of the continuity of the Schur-Horn mapping.

Given a negative definite Hermitian matrix $A \in \bbC^{n \times n}$, we
consider the following manifold optimization problem,
\begin{equation} \label{eq:qomm_energy}
    \min_{X \in \ob(n,p)}E_0(X) = \tr\left( (2I - X^* X ) X^* A X \right),
\end{equation}
where the oblique manifold is defined as,
\begin{equation} \label{def:obli_mani}
    \ob(n,p) = \left\{ X \in \bbC^{n\times p} \, \big|
    \, \text{diag}(X^* X) = \mathbf{1} \right\},
\end{equation}
where $\mathbf{1}$ denotes an all-one vector of length $p$. The
minimization problem~\eqref{eq:qomm_energy} without the oblique manifold
constraint has been known as the unconstrained orbital minimization
method~(OMM)~\cite{mauri1993orbital, ordejon1993unconstrained} in the
literature, which is used to seek the low-lying eigenpairs of $A$.
In~\cite{bierman2022quantum}, the OMM objective function is adopted in a
variational quantum eigensolver (VQE) on quantum computers, known as
quantum orbital minimization method~(qOMM). The manifold optimization
problem~\eqref{eq:qomm_energy} is the optimization problem of qOMM, where
the oblique manifold constraint appears due to the unitary quantum state
constraint from the quantum computer.

Without the oblique manifold constraint, OMM has an attractive property:
all minima are formed by the eigenvectors of $A$ corresponding to the
low-lying eigenvalues, and it has no spurious local
minima~\cite{LU201787}. With the oblique manifold constraint, we would
like to have the same property. In the study of the first-order stationary
points of \eqref{eq:qomm_energy}, we would like to show that some of them
are strict saddle points, and a local perturbation leads to decay in the
objective function. However, the oblique manifold constraint requires that
the perturbed points have to stay in the manifold, i.e., $\diag(X^*X) =
\mathbf{1}$. The continuity of the Schur-Horn mapping assures that for any
local perturbation on eigenvalues of $X^*X$ there is a corresponding point
in the neighborhood of $X^*X$ on the manifold. Then, the objective
function at a saddle point decays for a particular perturbation, and
hence, the same property of OMM holds for qOMM. For more details of
the application in qOMM, see \cref{app:qomm_application}.

Recently, other objective functions~\cite{gao2022triofm, gao2023wtpm,
li2019coord, liu2015efficient, wen2016trace} have been applied in VQE on
quantum computers and lead to the following constraint optimization
problems,
\begin{equation*}
    \min_{X\in \ob(n,p)} \frac{1}{2}\tr(X^*AX) + \frac{\mu}{4} \fnorm{X^*X- I}^2
    \quad \text{ and } \min_{X\in \ob(n,p)} \frac{1}{2} \fnorm{XX^* - A}^2.
\end{equation*}
In their landscape analysis, the continuity of the Schur-Horn mapping can
be applied to similar scenarios of saddle point analysis. Similar property
as that of OMM could be proved.

\subsection{Organization} 

The rest of the paper is organized as follows. In \cref{sec:SHC_diag}, we
first establish the Schur-Horn continuity for real diagonal matrices.
Then, in \cref{sec:strongSHcont}, we introduce the concept of strong
Schur-Horn continuity and demonstrate its application to specific matrix
types, which plays a crucial role in proving our main theorem. In
\cref{sec:SHC_symmetric}, we combine the previous results to prove the
Schur-Horn continuity for symmetric matrices. The Schur-Horn continuity
for Hermitian matrices is covered in \cref{sec:SHC_Hermitian}. Finally, we
conclude the paper in \cref{sec:conclusion}.

\section{Schur-Horn Continuity of Diagonal Matrices} \label{sec:SHC_diag}

We first prove the Schur-Horn continuity of diagonal matrices, as in
\cref{thm:diagonal_case}. The proof explicitly shows that the majorization
condition plays an essential role in the Schur-Horn continuity.

Before proving \cref{thm:diagonal_case}, we first prove \cref{lem:givens},
which is the generalized version of Schur-Horn continuity for matrices of
size $2 \times 2$. \Cref{lem:givens} will be repeatedly used throughout
this paper. In the following, standard big $O$ and big $\Theta$ notations
are used with $\veps$ being the asymptotic variable approaching zero.
Other variables, including matrix dimensions and matrix nonzero entries,
are viewed as constants.

\begin{lemma} \label{lem:givens}
    Given $\veps >0$ sufficiently small and $d_1, d_2 \in \bbR$. Let $B$
    be a symmetric matrix of form,
    \begin{equation*}
        B = \begin{bmatrix}
            b_{11} & b_{12} \\
            b_{21} & b_{22}
        \end{bmatrix}
        =
        \begin{bmatrix}
            d_1 - f(\veps) & b_{12} \\
            b_{12} & d_2 + g(\veps)
        \end{bmatrix},
    \end{equation*}
    with $f(\veps) = \Theta(\veps^\alpha), g(\veps) =
    \Theta(\veps^{\beta})$ for $\alpha, \beta > 0$. Further, we assume
    that 
    \begin{equation*}
        b_{12}^2 + f(\veps)(d_2 - d_1 + g(\veps)) \geq 0. 
    \end{equation*}
    Then there exists a Givens rotation $G$ with rotation angle $\theta =
    \Theta(\veps^{\gamma})$ such that the $(1, 1)$ entry of $\tB =
    GBG^\top$ is $d_1$ and $\fnorm{\tB - B} = O(\veps^{\delta})$, where
    various scenarios of $\gamma$ and $\delta$ are provided in
    \cref{tab:givens}.
\end{lemma}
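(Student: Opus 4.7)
The plan is to construct $G$ explicitly from the single scalar condition $(GBG^\top)_{11} = d_1$, and then bound $\theta$ and $\fnorm{\tB - B}$ asymptotically in $\veps$. Parametrizing $G = \begin{bmatrix} \cos\theta & \sin\theta \\ -\sin\theta & \cos\theta \end{bmatrix}$ and writing $t = \tan\theta$, a direct substitution of $b_{11} = d_1 - f(\veps)$ and $b_{22} = d_2 + g(\veps)$ reduces this condition, after clearing $c^2 = 1/(1+t^2)$, to the quadratic
\begin{equation*}
    \bigl(d_2 - d_1 + g(\veps)\bigr) t^2 + 2 b_{12} \, t - f(\veps) = 0.
\end{equation*}
The discriminant of this quadratic equals $4\bigl(b_{12}^2 + f(\veps)(d_2 - d_1 + g(\veps))\bigr)$, which is nonnegative by assumption, so a real root $t$ exists. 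I would pick the root closest to zero so that $\theta \to 0$ as $\veps \to 0$, keeping us in the perturbative regime where $\theta \approx t$.

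The next step is to extract the asymptotic order $\theta = \Theta(\veps^\gamma)$ of the chosen root in each regime tabulated in \cref{tab:givens}. Vieta's formulas give
\begin{equation*}
    t_1 + t_2 = \frac{-2 b_{12}}{d_2 - d_1 + g(\veps)}, \qquad t_1 \, t_2 = \frac{-f(\veps)}{d_2 - d_1 + g(\veps)},
\end{equation*}
so once the orders of $b_{12}$ and $d_2 - d_1$ are fixed, the order of each root can be read off directly. For instance, in the generic regime $d_1 \neq d_2$ with $b_{12} = \Theta(1)$, the two roots have orders $\Theta(1)$ and $\Theta(\veps^\alpha)$, and the small one matches $f(\veps)/(2 b_{12})$; in the degenerate regime $d_1 = d_2$, the quadratic formula $t = \bigl(-b_{12} \pm \sqrt{b_{12}^2 + g(\veps) f(\veps)}\bigr)/g(\veps)$ must be expanded directly, and its order depends on the relative magnitudes of $b_{12}$, $f(\veps)$, and $g(\veps)$. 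Enumerating these combinations yields the exponents $\gamma$ in the table.

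For the Frobenius-norm estimate I would use the first-order expansion $G = I + \theta K + O(\theta^2)$ with $K = \begin{bmatrix} 0 & 1 \\ -1 & 0 \end{bmatrix}$, which gives
\begin{equation*}
    \tB - B = \theta \, (KB - BK) + O(\theta^2), \qquad KB - BK = \begin{bmatrix} 2 b_{12} & b_{22} - b_{11} \\ b_{22} - b_{11} & -2 b_{12} \end{bmatrix}.
\end{equation*}
Thus $\fnorm{\tB - B}$ scales as $|\theta| \cdot \sqrt{8 b_{12}^2 + 2 (b_{22} - b_{11})^2}$ up to higher-order corrections, and substituting $\theta = \Theta(\veps^\gamma)$ together with the known orders of $b_{12}$ and of $b_{22} - b_{11} = (d_2 - d_1) + g(\veps) + f(\veps)$ yields the exponent $\delta$ in each row of \cref{tab:givens}.

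The main obstacle is keeping the case analysis consistent: when $d_1 = d_2$ or when $b_{12}$ is itself of positive order in $\veps$, both roots of the quadratic can become small simultaneously, and one must verify that the chosen root still vanishes with $\veps$ and stays real. This is exactly what the discriminant hypothesis guarantees. The rest of the argument is routine bookkeeping of asymptotic orders once each regime is isolated, with the uniform template of Vieta plus the commutator estimate supplying both $\gamma$ and $\delta$.
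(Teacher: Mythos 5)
Your proposal follows the same skeleton as the paper's: parameterize by $t = \tan\theta$, reduce the $(1,1)$ constraint to the quadratic $(d_2 - d_1 + g)\,t^2 + 2b_{12}\,t - f = 0$, invoke the discriminant hypothesis for a real root, and read off asymptotic orders case by case. The one departure is your uniform Frobenius estimate via the first-order commutator expansion $\tB - B = \theta(KB - BK) + O(\theta^2)$, which is tidy and does give the correct $\delta$ in the first three rows of \cref{tab:givens}.

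That linearization, however, breaks down precisely in the fourth row ($b_{12}=0$, $d_1 = d_2$, $\alpha \le \beta$). There $t = \pm\sqrt{f(\veps)/g(\veps)} = \Theta(\veps^{(\alpha-\beta)/2})$, so $\theta = \Theta(1)$ --- the table entry $\gamma = 0$ is precisely recording that the rotation is \emph{not} a small perturbation of the identity (and when $\alpha < \beta$, $t \to \infty$ so $\theta \to \pi/2$). Your claim that the chosen root ``still vanishes with $\veps$,'' guaranteed by the discriminant, is wrong on both counts: the discriminant only gives realness, and in this regime there is no small root to pick. With $\theta = \Theta(1)$, the expansion $G = I + \theta K + O(\theta^2)$ is not asymptotic and the commutator bound only yields $\fnorm{\tB - B} = O(1)$, nowhere near the claimed $O(\veps^\alpha)$. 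The paper closes this case by computing $\tB - B$ exactly: its off-diagonals are $cs(g(\veps)+f(\veps))$ with $|cs| \le 1$ unconditionally and its diagonals are $\pm f(\veps)$, so $\fnorm{\tB - B} = O(\veps^{\min(\alpha,\beta)}) = O(\veps^\alpha)$ regardless of how large $\theta$ is. You need that exact entrywise bound, or some other $\theta$-uniform estimate, to finish Case 4; the commutator route alone does not suffice.

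A minor point: your aside about ``$b_{12}$ of positive order in $\veps$'' misreads the setup --- in this lemma $b_{12}$ is a fixed scalar, not $\veps$-dependent, so the only dichotomy that arises is $b_{12} = 0$ versus $b_{12} \neq 0$.
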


\begin{table}[htbp]
    \centering
    \begin{tabular}{lllcc}
        \toprule
        \multicolumn{3}{c}{Various Scenarios} & $\gamma$ & $\delta$ \\
        \toprule
        $b_{12} \neq 0$ & & & $\alpha$ & $\alpha$ \\
        \midrule
        $b_{12} = 0$ & $d_1 \neq d_2$ & & $\alpha / 2$ & $\alpha / 2$ \\
        \midrule
        $b_{12} = 0$ & $d_1 = d_2$ & $\alpha > \beta$ & $(\alpha-\beta)/2$ & $(\alpha + \beta)/2$ \\
        \midrule
        $b_{12} = 0$ & $d_1 = d_2$ & $\alpha \leq \beta$ & $0$ & $\alpha$ \\
        \bottomrule
    \end{tabular}
    \caption{Various scenarios of $b_{12}$, $d_1$, $d_2$, $\gamma$, and
    $\delta$ for \cref{lem:givens}.}
    \label{tab:givens}
\end{table}

\begin{proof}
    Denote the Givens rotation matrix as $G = \begin{bmatrix} c & s \\ 
    -s & c \end{bmatrix}$, where $c = \cos \theta$ and $s = \sin \theta$.
    Then we have 
    \begin{equation} \label{eq:tildeB}
        \tB = GBG^\top = \begin{bmatrix}
            c^2b_{11} + s^2b_{22} + 2csb_{12} &
            (c^2-s^2) b_{12} + cs(b_{22}-b_{11}) \\
            (c^2-s^2)b_{12} + cs(b_{22}-b_{11}) &
            c^2b_{22} + s^2b_{11} - 2csb_{12}
        \end{bmatrix}.
    \end{equation}
    Equating the $(1,1)$ entry of $\tB$ and $d_1$, we obtain,
    \begin{equation} \label{eq:secondorderoriginal}
        c^2 b_{11} + s^2 b_{22} + 2csb_{12} = d_1.
    \end{equation}
    Denoting $t = \frac{s}{c} = \tan \theta$, we reformulate
    \eqref{eq:secondorderoriginal} into
    \begin{equation} \label{eq:secondorder}
        (b_{22} - d_1) t^2 + 2 b_{12} t + b_{11} - d_1 = 0.
    \end{equation}
    Equation \eqref{eq:secondorder} has real solutions if and only if
    \begin{equation} \label{eq:delta}
        \Delta = 4(b_{12}^2 - (b_{22} - d_1)(b_{11} - d_1))
        = 4(b_{12}^2 + f(\veps)(d_2 - d_1 + g(\veps))) \geq 0.
    \end{equation}
    According to the assumption in the lemma, we know that
    \eqref{eq:secondorder} always has at least one solution, thus $G$
    exists. Furthermore, by solving the quadratic equation, one has
    \begin{equation} \label{eq:quadratic_solution}
        t = \frac{f(\veps)}{b_{12} \pm \sqrt{b_{12}^2 + f(\veps)(d_2 - d_1
        + g(\veps))}}.
    \end{equation}

    Now, we divide the discussion into various scenarios, as in
    \cref{tab:givens}.
    
    \emph{Case 1: $b_{12} \neq 0$.}

    In this case, $\Delta$ is positive when $\veps$ is sufficiently small.
    The quadratic equation~\eqref{eq:secondorder} then has two solutions in
    \eqref{eq:quadratic_solution}, and one of them is of order
    $\frac{f(\veps)}{2b_{12}}$. Hence, one solution gives $\tan \theta =
    \Theta(\veps^\alpha)$, thus $\theta = \Theta(\veps^\alpha)$. By the
    triangular inequality of matrix norm, we obtain,
    \begin{align*}
        \fnorm{\tB - B} & \leq \fnorm{GBG^\top - GB} + \fnorm{GB - B} \\
        & \leq \fnorm{GB} \cdot \fnorm{G^\top - I} + 
        \fnorm{G-I} \cdot \fnorm{B} = O(\veps^{\alpha}).
    \end{align*}

    \emph{Case 2: $b_{12} = 0$ and $d_2 \neq d_1$.}

    When $b_{12} = 0$, we have  
    \begin{equation*}
        \tB - B = 
        \begin{bmatrix}
            f(\veps) & 
            cs(d_2 - d_1 + g(\veps) + f(\veps)) \\
            cs(d_2 - d_1 + g(\veps) + f(\veps)) &
            - f(\veps)
        \end{bmatrix}.
    \end{equation*}
    At this time, the solutions \eqref{eq:quadratic_solution} of the quadratic
    equation \eqref{eq:secondorder} admit,
    \begin{equation*}
        t = \pm \sqrt{\frac{f(\veps)}{d_2 - d_1 + g(\veps)}}
        = \Theta(\veps^{\alpha/2}),
    \end{equation*}
    where the positivity of the quantity under the square root is guaranteed by
    \eqref{eq:delta}. Thus, we can derive $\theta = \Theta(\veps^{\alpha/2})$
    and $\fnorm{\tB - B} = O(\veps^{\alpha/2})$.

    \emph{Case 3 and Case 4: $b_{12} = 0$ and $d_2 = d_1$.}

    Denoting $d = d_1 = d_2$, the solutions
    \eqref{eq:quadratic_solution} of the quadratic equation
    \eqref{eq:secondorder} admit,
    \begin{equation*}
        t = \pm \sqrt{\frac{f(\veps)}{g(\veps)}} 
        = \Theta(\veps^{(\alpha - \beta)/2}),
    \end{equation*}    
    where the positivity of the quantity under the square root is also
    guaranteed by \eqref{eq:delta}. Additionally, we have 
    \begin{equation} \label{eq:entries_diff_case34}
        \tB - B =
        \begin{bmatrix}
            f(\veps) & cs(g(\veps) + f(\veps)) \\
            cs(g(\veps) + f(\veps)) & -f(\veps)
        \end{bmatrix},
    \end{equation}
    where $|s| \leq 1$ and $|c| \leq 1$. 
    
    When $\alpha > \beta$ and $\veps$ sufficiently small, we have $\theta
    = \Theta(\veps^{(\alpha-\beta)/2})$. However, consider entries
    comparison of $\tB$ and $B$ as \eqref{eq:entries_diff_case34}, one can
    deduce that 
    \begin{equation*}
        cs(g(\veps)+f(\veps)) =
        \Theta(\veps^{(\alpha-\beta)/2+\beta})=\Theta(\veps^{(\alpha+\beta)/2}),
    \end{equation*}
    with $f(\veps)=\Theta(\veps^{\alpha})$. Note that $\alpha > \beta$ implies
    $\frac{\alpha + \beta}{2} < \alpha$, thus we have $\fnorm{\tB - B} =
    O(\veps^{(\alpha + \beta)/2})$.
    
    When $\alpha \leq \beta$, we have $\theta = \Theta(1)$. Comparing entries of
    $\tB$ and $B$ as \eqref{eq:entries_diff_case34}, we could see that the norm
    of $\tB - B$ is bounded by the lower order of $f(\veps)$ and $g(\veps)$, and
    we have $\fnorm{\tB - B} = O(\veps^{\alpha})$.
\end{proof}

\begin{remark}
    \Cref{lem:givens} could be directly extended to the case where one
    (or both) of $\alpha,\beta$ is $\infty$, i.e., when $f(\veps)$ or
    $g(\veps)$ decays faster than any polynomial in $\veps$. Throughout
    the paper, when invoking \cref{lem:givens} we use the convention
    that a generic perturbation size $O(\veps)$ can be treated as
    $\Theta(\veps^{\alpha})$ for some $\alpha\in[1,\infty]$. Under this
    convention, the bounds $\fnorm{\tB-B}=O(\veps^{\delta})$ listed in
    \cref{tab:givens} remain valid for all $\alpha,\beta\in[1,\infty]$
    (including $\infty$). For brevity, we will not distinguish between
    polynomial and super-polynomial decay in the subsequent estimates.
\end{remark}

\begin{theorem} \label{thm:diagonal_case}
    Any diagonal matrix $A\in \bbR^{n\times n}$ is Schur-Horn continuous.
\end{theorem}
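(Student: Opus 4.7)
The plan is to leverage Horn's theorem for existence of $\tB$ and then compute the Frobenius distance directly, exploiting the diagonal structure of $A$. Since $A\in\bbR^{n\times n}$ is diagonal, its eigenvalues coincide with its diagonal entries, so the eigenvalue vector equals $d$. Given $\tlambda \prec d$ with $\|\tlambda - d\|_2 = O(\veps)$, Horn's theorem (recalled in the introduction) produces a symmetric matrix $\tB$ with $\diag(\tB) = d$ and eigenvalues $\tlambda$. My claim is that \emph{any} such $\tB$ automatically satisfies the required bound, because $\fnorm{A-\tB}$ turns out to depend only on $\tlambda$ and $d$.

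First I would observe that because $A$ and $\tB$ share the same diagonal $d$, the difference $A - \tB$ has zero diagonal, so
\begin{equation*}
    \fnorm{A - \tB}^2 = \sum_{i\neq j}\tB_{ij}^2 = \fnorm{\tB}^2 - \sum_i \tB_{ii}^2 = \|\tlambda\|_2^2 - \|d\|_2^2,
\end{equation*}
where the last equality uses $\fnorm{\tB}^2 = \tr(\tB^2) = \|\tlambda\|_2^2$ (symmetry of $\tB$) and $\tB_{ii}=d_i$. A direct factorization then bounds
\begin{equation*}
    \bigl|\|\tlambda\|_2^2 - \|d\|_2^2\bigr|
    = \bigl|(\tlambda-d)^\top(\tlambda+d)\bigr|
    \leq \|\tlambda - d\|_2 \cdot \|\tlambda + d\|_2 = O(\veps),
\end{equation*}
since $\|\tlambda + d\|_2 = O(1)$ for small $\veps$. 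Taking square roots yields $\fnorm{A - \tB} = O(\veps^{1/2})$, which meets \cref{def:SHcont}.

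The main conceptual step, and very likely the reason the authors introduce \cref{lem:givens} before this theorem, is that the identity above is special to the diagonal case: it exploits $\tr(A\tB) = \sum_i d_i \tB_{ii} = \|d\|_2^2$ depending only on $\diag(\tB)$. For a general symmetric $A$ (as needed in \cref{thm:continuitySH}), the cross term $\tr(A\tB)$ genuinely involves the off-diagonal entries of $\tB$, and this shortcut no longer closes. If one wants a proof that is constructive and that sets up the machinery for the general case, the alternative is to build $\tB$ explicitly from $\diag(\tlambda)$ by a sequence of at most $n-1$ Givens rotations in the spirit of the Chan--Li construction, sorting $\tlambda^{\uparrow} \prec d^{\uparrow}$ and at each step rotating a pair $(i,j)$ in the as-yet-unprocessed submatrix to drive one diagonal entry onto its target. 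Because that submatrix stays diagonal, every rotation falls into Cases 2--4 of \cref{lem:givens} with $f(\veps), g(\veps) = O(\veps)$, so each rotation contributes at most $O(\veps^{1/2})$ in Frobenius norm; treating $n$ as constant, the triangle inequality recovers $\fnorm{A - \tB} = O(\veps^{1/2})$. The primary obstacle in that route, and the only nontrivial bookkeeping, is checking that the intermediate targets $\mu$ produced by each rotation stay within $O(\veps)$ of the corresponding $d_k$, which is immediate from trace preservation and the closeness of $\tlambda$ to $d$.
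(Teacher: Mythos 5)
Your proof is correct, and it takes a genuinely different route from the paper's. The paper constructs $\tB$ explicitly from $\diag(\tlambda)$ via a priority-queue-driven sequence of Givens rotations (using \cref{lem:givens}), bounding each rotation's contribution by $O(\veps^{1/2})$; that construction is deliberately set up so that it generalizes, block by block, to the proof of \cref{thm:continuitySH} and \cref{thm:continuitySH_hermit}. You instead invoke Horn's existence theorem and observe that for diagonal $A$, the Frobenius distance is determined by $\tlambda$ and $d$ alone: since $\lambda = d$,
\begin{equation*}
    \fnorm{A - \tB}^2 = \fnorm{A}^2 + \fnorm{\tB}^2 - 2\tr(A\tB)
    = \|d\|_2^2 + \|\tlambda\|_2^2 - 2\|d\|_2^2 = \|\tlambda\|_2^2 - \|d\|_2^2,
\end{equation*}
because $\tr(A\tB) = \sum_i d_i \tB_{ii} = \|d\|_2^2$ uses only $\diag(\tB)=d$. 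The Cauchy--Schwarz bound $\|\tlambda\|_2^2 - \|d\|_2^2 \leq \|\tlambda - d\|_2\,\|\tlambda + d\|_2 = O(\veps)$ then finishes it. This is shorter and cleaner, and it yields the stronger statement that \emph{every} Horn matrix with the prescribed spectrum and diagonal is within $O(\veps^{1/2})$ of $A$, not just some constructed one. Your diagnosis of the limitation is also accurate: for non-diagonal $A$, the term $\tr(A\tB)$ depends on the off-diagonal entries of $\tB$, so this identity no longer closes; that is precisely why the paper develops the constructive Givens machinery (and later strong Schur-Horn continuity) instead of stopping at this trace identity. One cosmetic point: the absolute value in $\bigl|\|\tlambda\|_2^2 - \|d\|_2^2\bigr|$ is unnecessary, since the quantity equals $\fnorm{A-\tB}^2 \geq 0$ (equivalently, the $\ell^2$ norm is Schur-convex and $d$ is majorized by $\tlambda$).
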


\begin{proof}
    Without loss of generality, we assume that the diagonal entries of $A$
    are non-decreasing, i.e., $d_1 \leq d_2 \leq \cdots \leq d_n$. The
    perturbed matrix is denoted as $\tA^{(0)}$. The perturbed eigenvalues
    are denoted as $\tlambda_i = d_i + h_i(\veps)$. When the eigenvalues
    have a gap, i.e., $\lambda_i < \lambda_{i+1}$, the perturbed
    eigenvalues keep the ordering, i.e., $\tlambda_i \leq \tlambda_{i+1}$
    for sufficiently small $\veps$. When eigenvalues are identical,
    $\lambda_i = \lambda_{i+1}$, the perturbed eigenvalues are ordered
    based on their perturbations. Then we have the majorization relations
    given $\veps > 0$ sufficiently small 
    \begin{align*}
        \tlambda_1 & \leq d_1, \\
        \tlambda_1 + \tlambda_2 & \leq d_1 + d_2, \\
        &\,\,\,\vdots \\
        \tlambda_1 + \cdots + \tlambda_{n-1} & \leq d_1 + \cdots + d_{n-1}, \\
        \tlambda_1 + \cdots + \tlambda_{n-1} + \tlambda_n
        & = d_1 + \cdots + d_{n-1} + d_n.
    \end{align*}
    Hence we have
    \begin{eqnarray*}
        & & h_1(\veps) + h_2(\veps) + \cdots + h_i(\veps) \leq 0, \quad
        i=1, \cdots, n-1, \text{ and} \\ & & h_1(\veps) + \cdots +
        h_n(\veps) = 0.
    \end{eqnarray*}
    Next, we describe a procedure to correct the diagonal entries from
    perturbed $\tlambda_i$ to $d_i$.
    
    We maintain a priority queue with diagonal indices as elements. For
    any diagonal index $i$ in the queue, we ensure that $d_i +
    \th_i(\veps)$ has a negative perturbation $\th_i(\veps) < 0$, where
    $\th_i(\veps)$ denotes the updated perturbation throughout the
    procedure. Starting from the first diagonal entry, we check and
    enqueue the index $i = 1, 2, \dots$ in order if $\th_i(\veps) < 0$ and
    skip the index $i$ if $\th_i(\veps) = 0$. We keep on checking and
    enqueuing indices until the first index $j$ such that $\th_j(\veps) >
    0$. If $j$ does not exist, then by the last equation in majorization
    relation, we know that the queue is also empty and the diagonal
    entries of the perturbed $A$ have all been corrected. Otherwise, we
    obtain a $j$ and the updated perturbations satisfy,
    \begin{equation} \label{eq:updated-majorization}
        \th_1(\veps) + \cdots + \th_j(\veps) = h_1(\veps) + \cdots + 
        h_j(\veps) \leq 0.
    \end{equation}
    This condition is satisfied in the first correction step and we will
    verify it after each step. By the $j$-th majorization relation, the
    queue is guaranteed to be non-empty. We pop an index from the queue
    and denote it as $i$. The current working matrix is denoted as
    $\tA^{(i-1)}$.\footnote{If the index $i$ is skipped, we assign
    $\tA^{(i)} = \tA^{(i-1)}$.}
    
    Based on the property of $i$ and $j$, the inequality assumption in
    \cref{lem:givens} is always satisfied. Hence, the lemma provides a
    Givens rotation applying to the $i$-th and $j$-th columns and rows to
    correct the $(i, i)$ diagonal entry. Without loss of generality,
    applying the extended Givens rotation matrix symmetrically to the
    current matrix $\tA^{(i-1)}$ we obtain $\tA^{(i)}$, whose top-left
    $j$-by-$j$ submatrix admits,
    \begin{equation} \label{eq:rotate_diag_case}
        \begin{split}
            &
            \begin{bmatrix}
                I & & & \\ 
                & c & & s \\
                & & I & \\
                & -s & & c \\
            \end{bmatrix}
            \renewcommand\arraystretch{1.5}
            \begin{bmatrix}
                \tA_{\calI_1 \calI_1} & p_{i \calI_1}^\top &
                \tA_{\calI_2 \calI_1}^\top & p_{j \calI_1}^\top \\
                p_{i \calI_1} & d_i + \th_i(\veps) & p_{\calI_2 i}^\top & 0 \\
                \tA_{\calI_2 \calI_1} & p_{\calI_2 i} & \tA_{\calI_2 \calI_2} &
                p_{j \calI_2}^\top \\
                p_{j \calI_1} & 0 & p_{j \calI_2} & d_j + \th_j(\veps) \\
            \end{bmatrix}
            \renewcommand\arraystretch{1}
            \begin{bmatrix}
                I & & & \\ 
                & c & & -s \\
                & & I & \\
                & s & & c \\
            \end{bmatrix} \\
            = &
            \renewcommand\arraystretch{1.5}
            \begin{bmatrix}
                \tA_{\calI_1 \calI_1} &
                cp_{i \calI_1}^\top + sp_{j \calI_1}^\top &
                \tA_{\calI_2 \calI_1}^\top &
                cp_{j \calI_1}^\top - sp_{i \calI_1}^\top \\
                cp_{i \calI_1} + sp_{j \calI_1} & d_i &
                cp_{\calI_2 i}^\top + sp_{j \calI_2} & p_{ij} \\
                \tA_{\calI_2 \calI_1} & cp_{\calI_2 i} + sp_{j \calI_2}^\top &
                \tA_{\calI_2 \calI_2} & cp_{j \calI_2}^\top - sp_{\calI_2 i} \\
                cp_{j \calI_1} - sp_{i \calI_1} & p_{ji} &
                cp_{j \calI_2} - sp_{\calI_2 i}^\top &
                d_j + \th_i(\veps) + \th_j(\veps) \\
            \end{bmatrix},
        \end{split}
    \end{equation}
    where $\calI_1 = \{1, \ldots, i-1\}$, $\calI_2 = \{i+1, \ldots,
    j-1\}$, vectors $p_{i \calI_1}$, $p_{\calI_2 i}$, $p_{j \calI_1}$ and
    $p_{j \calI_2}$ are all perturbations introduced in previous
    steps, submatrices $\tA_{\calI_1 \calI_1}$, $\tA_{\calI_2 \calI_1}$,
    and $\tA_{\calI_2 \calI_2}$ are untouched submatrices of
    $\tA^{(i-1)}$,\footnote{We dropped the superscript $(i-1)$ for
    simplicity.} and
    \begin{equation*}
        p_{ij} = p_{ji} = cs(d_j - d_i + \th_j(\veps) - \th_i(\veps)).
    \end{equation*}
    The Givens rotation above corrects the $(i,i)$ diagonal entry and it
    only changes the top-left $j$-by-$j$ submatrix of $\tA^{(i-1)}$,
    leaving the remain part of $\tA^{(i-1)}$ unchanged. Since $\|\lambda -
    \tlambda\|_2=O(\veps)$, from the correction procedure we have 
    \begin{equation} \label{eq:order_thi_thj}
        \th_i(\veps)=O(\veps)=\Theta(\veps^{\alpha}), \quad 
        \th_j(\veps)=O(\veps)=\Theta(\veps^{\beta}),    
    \end{equation}
    with $\alpha \geq 1$ and $\beta \geq 1$. 

    We now have three cases: i) $\th_i(\veps) + \th_j(\veps) < 0$; ii)
    $\th_i(\veps) + \th_j(\veps) = 0$; and iii) $\th_i(\veps) +
    \th_j(\veps) > 0$. In case i), we enqueue $j$ and start checking the
    following indices. In case ii), we skip $j$ and start checking the
    indices after $j$. In case iii), we pop another index from the queue
    and repeat the correction procedure. In all cases, the updated
    perturbation at index $i$ and $j$ are $0$ and $\th_i(\veps) +
    \th_j(\veps)$, respectively. Hence \eqref{eq:updated-majorization}
    holds for all indices greater or equal to $j$. Then, the majorization
    relations guarantee that the procedure ends if and only if all
    diagonal entries have been corrected.

    Finally, we show that the corrected matrix is within an $\veps^{1/2}$
    neighborhood of the original matrix. In the above procedure, each step
    corrects at least one diagonal index, and the procedure finishes in at
    most $n$ steps for $n$ being the matrix size. Now, we show that all
    the off-diagonals of $\tA^{(i-1)}$ are $O(\veps^{1/2})$ for $i=1,
    \ldots, n$, by induction. It is obvious that all the off-diagonals of
    $\tA^{(0)}=\diag(\tlambda)$ are zeros and hence $O(\veps^{1/2})$,
    which gives the start point of induction. For those $\th_i(\veps)=0$,
    $\tA^{(i)}=\tA^{(i-1)}$, hence we only need to consider the case when
    $\th_i(\veps)\neq 0$. Note that vectors $p_{i \calI_1}$, $p_{\calI_2
    i}$, $p_{j \calI_1}$ and $p_{j \calI_2}$ in
    \eqref{eq:rotate_diag_case} are $O(\veps^{1/2})$ by our induction
    assumption, it implies that the off-diagonals of $\tA^{(i)}$ are
    $O(\veps^{1/2})$ except $p_{ij}$ and $p_{ji}$. Denote
    \begin{equation*}
        B = \begin{bmatrix}
            d_i + \th_i(\veps) & 0 \\ 0 & d_j + \th_j(\veps)
        \end{bmatrix}, \quad
        \tB = \begin{bmatrix}
            d_i & p_{ij} \\ p_{ji} & d_j + \th_j(\veps) + \th_i(\veps)
        \end{bmatrix},
    \end{equation*}
    we split the discussion according to the scenarios with $b_{12}=0$ in
    \cref{lem:givens} and \eqref{eq:order_thi_thj} as follow:
    \begin{enumerate}[(i)]
        \item $d_i \neq d_j$, $\fnorm{B-\tB} = O(\veps^{\alpha/2}) =
        O(\veps^{1/2})$; 

        \item $d_i = d_j$ and $\alpha > \beta$, $\fnorm{B - \tB} =
        O(\veps^{(\alpha + \beta) / 2})= O(\veps^{1/2})$;

        \item $d_i = d_j$ and $\alpha \leq \beta$, $\fnorm{B - \tB} =
        O(\veps^{\alpha})= O(\veps^{1/2})$.
    \end{enumerate}
    Therefore, we conclude that all the off-diagonals of $\tA^{(i)}$ are
    $O(\veps^{1/2})$. Furthermore, note that the diagonals of $\tA^{(i)}$
    and $\tA^{(i-1)}$ only differ at indices $i$ and $j$, together with
    $\fnorm{B - \tB}=O(\veps^{1/2})$, it implies that $\fnorm{\tA^{(i)} -
    \tA^{(i-1)}}=O(\veps^{1/2})$. By triangular inequality of Frobenius
    norm, the distance between the corrected matrix and the original
    matrix is bounded as,
    \begin{equation*}
        \fnorm{A - \tA^{(n)}} \leq
        \fnorm{A - \tA^{(0)}} + \sum_{k = 1}^n \fnorm{\tA^{(k-1)} - \tA^{(k)}}
        = O(\veps^{1/2}).
    \end{equation*}
    Thus, $A$ is Schur-Horn continuous.
\end{proof}

\begin{remark}
    \Cref{thm:diagonal_case} is covered by our main
    \cref{thm:continuitySH_hermit}. And the proof of
    \cref{thm:diagonal_case} has the same structure as that of
    \cref{thm:continuitySH_hermit}, where more complicated scenarios are
    discussed in the later one. We present the diagonal matrix case as a
    stand-alone theorem to facilitate the understanding of the main
    theorem proof. The diagonal matrix perturbation is also used to prove
    the strong Schur-Horn continuity.
\end{remark}

\section{Strong Schur-Horn Continuity} \label{sec:strongSHcont}

In this section, we define the strong Schur-Horn continuity. The strong
Schur-Horn continuity is a stronger version of the Schur-Horn
continuity, which plays a key role in the proof of Schur-Horn continuity
of general symmetric matrices. And we will prove that if a matrix is
strongly Schur-Horn continuous then it is Schur-Horn continuous, but not
the other way around. Before delving into the proofs, it is essential to
define the spectrum window and strong Schur-Horn continuity.

\begin{definition} [Spectrum Window]
    Let $A$ be a symmetric matrix. The spectrum window of $A$ is defined
    as the closed interval of the minimum and maximum eigenvalues of $A$,
    i.e., \footnote{We use $\lmin(\cdot)$ and $\lmax(\cdot)$ to denote the
    minimum and maximum eigenvalues of a matrix, respectively.}
    \begin{equation*}
        \omega(A) := [\lmin(A), \lmax(A)].
    \end{equation*}
\end{definition}

\begin{definition}[Strong Schur-Horn Continuity] \label{def:strongSHcont}
    Suppose $A \in \bbR^{n \times n}$ is a symmetric matrix with an
    eigendecomposition $A = Q \Lambda Q^\top$, where $Q$ is the
    orthonormal eigenvector matrix and $\Lambda$ is the diagonal
    eigenvalue matrix. Matrix $A$ is strongly Schur-Horn continuous if,
    for any perturbed eigenvalues $\tLambda$ satisfying $\tr(\tLambda) =
    \tr(\Lambda)$ and $\fnorm{\tLambda - \Lambda} = O(\veps)$ for $\veps >
    0$ sufficiently small, there exists a symmetric matrix $\tB = G_2 Q
    G_1 \tLambda G_1^\top Q^\top G_2^\top$ such that
    \begin{enumerate}
        \item $\diag(\tB)=\diag(A)$,
        \item $G_1$ and $G_2$ are orthogonal matrices, and
        \item $\fnorm{G_i - I} = O(\veps^{1/2})$ for $i= 1, 2$.
    \end{enumerate}
\end{definition}

With relaxed requirements, it is straightforward to conclude that the
strong Schur-Horn continuity directly implies the Schur-Horn continuity.

\begin{corollary} \label{cor:strongSH-SH}
    If a matrix is strongly Schur-Horn continuous, then it is Schur-Horn
    continuous.
\end{corollary}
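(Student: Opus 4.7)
The plan is to deduce Schur-Horn continuity directly from the definition of strong Schur-Horn continuity by a sequence of triangle-inequality estimates, with almost no combinatorial work. Given a symmetric $A$ with eigenvalues $\lambda$ and diagonals $d$, fix an eigendecomposition $A = Q \Lambda Q^\top$, and let $\tlambda$ be a perturbation with $\|\lambda - \tlambda\|_2 = O(\veps)$ and $\tlambda \prec d$. First I would observe that the equality part of majorization forces
\begin{equation*}
    \tr(\tLambda) = \sum_i \tlambda_i = \sum_i d_i = \tr(A) = \tr(\Lambda),
\end{equation*}
where $\tLambda = \diag(\tlambda)$ is ordered compatibly with $\Lambda$, so that $\fnorm{\tLambda - \Lambda} = \|\tlambda-\lambda\|_2 = O(\veps)$. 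This is precisely the hypothesis of strong Schur-Horn continuity, hence that property yields orthogonal matrices $G_1, G_2$ with $\fnorm{G_i - I} = O(\veps^{1/2})$ such that $\tB := G_2 Q G_1 \tLambda G_1^\top Q^\top G_2^\top$ satisfies $\diag(\tB) = \diag(A) = d$.

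The matrix $\tB$ is symmetric with eigenvalues $\tlambda$ (by orthogonal conjugation from $\tLambda$) and diagonal $d$, so only the quantitative bound $\fnorm{A - \tB} = O(\veps^{1/2})$ remains. I would introduce the intermediate matrix $M := Q G_1 \tLambda G_1^\top Q^\top$, splitting $\fnorm{A - \tB} \leq \fnorm{A - M} + \fnorm{M - G_2 M G_2^\top}$. For the first term, orthogonal invariance of the Frobenius norm gives
\begin{equation*}
    \fnorm{A - M} = \fnorm{\Lambda - G_1 \tLambda G_1^\top}
    \leq \fnorm{\Lambda - \tLambda} + \fnorm{\tLambda - G_1 \tLambda G_1^\top},
\end{equation*}
where the first summand is $O(\veps)$ and the second is bounded by $2\|\tLambda\|_2 \fnorm{G_1 - I} = O(\veps^{1/2})$, using submultiplicativity $\fnorm{XY} \leq \|X\|_2 \fnorm{Y}$ together with the fact that $\|\tLambda\|_2$ is bounded by a constant for $\veps$ small.

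For the second term, a similar split $\fnorm{M - G_2 M G_2^\top} \leq \fnorm{(I-G_2)M} + \fnorm{G_2 M (I - G_2^\top)}$ yields a bound of order $\|M\|_2 \fnorm{G_2 - I} = O(\veps^{1/2})$, since $\|M\|_2 = \|\tLambda\|_2$ is likewise bounded. Combining the two estimates produces $\fnorm{A - \tB} = O(\veps^{1/2})$, which is exactly the assertion of Schur-Horn continuity. There is no real obstacle here: the subtle quantitative work was already absorbed into the definition of strong Schur-Horn continuity, and the only thing to verify is that the hypothesis $\tlambda \prec d$ supplies the trace equality needed to invoke that definition. The proof is essentially a definition-chasing exercise plus standard norm inequalities.
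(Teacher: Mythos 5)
Your proof is correct and fills in exactly what the paper leaves implicit: the paper states this corollary without proof (remarking only that it is ``straightforward''), and you correctly identify the two points to verify --- that $\tlambda \prec d$ supplies the trace equality $\tr(\tLambda)=\tr(\Lambda)$ needed to invoke strong Schur-Horn continuity, and that the structural bound $\fnorm{G_i - I} = O(\veps^{1/2})$ together with orthogonal invariance of the Frobenius norm yields $\fnorm{A-\tB}=O(\veps^{1/2})$. This is precisely the intended definition-chasing argument, so no further comment is needed.
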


\begin{remark}
    There exists a counterexample matrix $A$ being Schur-Horn continuous
    but not strongly Schur-Horn continuous. Consider the matrix $A$ and
    its perturbed eigenvalue matrix $\tLambda$,
    \begin{equation*}
        A =
        \begin{bmatrix}
            1 & 0 \\
            0 & 2
        \end{bmatrix} \text{ and }
        \tLambda =
        \begin{bmatrix}
            1+\veps & 0 \\
            0 & 2-\veps
        \end{bmatrix},
    \end{equation*}
    where $\veps > 0$ is sufficiently small, and the perturbation
    satisfies the last majorization relation. By \cref{thm:diagonal_case},
    the diagonal matrix $A$ is Schur-Horn continuous. However, for the
    given perturbation above, the first majorization relation is violated.
    By Schur theorem, there does not exist a matrix $\tB$ whose eigenvalue
    matrix is $\tLambda$ and diagonal entries being 1 and 2. Hence, we
    conclude that $A$ is not strongly Schur-Horn continuous.
\end{remark}

Comparing \cref{def:strongSHcont} and \cref{def:SHcont}, there are two
differences. First, the requirement of the perturbation is relaxed in
\cref{def:strongSHcont}. In \cref{def:SHcont}, the perturbation needs to
satisfy all majorization relations as in \eqref{eq:major}. While,
\cref{def:strongSHcont} only requires the perturbation to satisfy the last
majorization relation. This difference is essential between the Schur-Horn
continuity and the strong Schur-Horn continuity. Furthermore, we have
\cref{prop:strongSH-major} describing the relation of eigenvalues and
diagonal entries of a strongly Schur-Horn continuous matrix, which
directly extends the result of the Schur part in the Schur-Horn theorem.
The proof of \cref{prop:strongSH-major} can be found in
\cref{app:strongSH-major-proof}.

\begin{proposition} \label{prop:strongSH-major}
    Suppose matrix $A\in \bbR^{n\times n}$ is strongly Schur-Horn
    continuous with eigenvalues $\lambda\in \bbR^n$ and diagonal entries
    $d\in\bbR^n$. Without loss of generality, both $\lambda$ and $d$ are
    in non-decreasing order, then either $A$ is a scalar matrix, i.e., 
    \begin{equation*}
        \lambda_1 = \cdots = \lambda_n = d_1 = \cdots = d_n;
    \end{equation*}
    or $A$ satisfies the strict majorization relation, i.e., the first
    $n-1$ majorization inequalities of $A$ are strict, 
    \begin{equation}
        \label{eq:strict-major}
        \begin{split}
        \lambda_1 & < d_1, \\
        \lambda_1 + \lambda_2 & < d_1 + d_2, \\
        &\,\,\,\vdots \\
        \lambda_1 + \cdots + \lambda_{n-1} & < d_1 + \cdots + d_{n-1}, \\
        \lambda_1 + \cdots + \lambda_{n-1} + \lambda_n
        & = d_1 + \cdots + d_{n-1} + d_n.
        \end{split}
    \end{equation}
\end{proposition}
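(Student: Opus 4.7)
The plan is to prove the contrapositive: assume $A$ is strongly Schur-Horn continuous and that some $k \in \{1, \ldots, n-1\}$ satisfies $\sum_{i=1}^k \lambda_i = \sum_{i=1}^k d_i$; I will show that $A$ must then be scalar. Since $A$ being scalar is equivalent to $\lambda_1 = \lambda_n$ (combined with the trace identity and Schur's inequalities, which force all $d_i$ to coincide too), I suppose for contradiction that $\lambda_1 < \lambda_n$. The strategy is to exhibit a trace-preserving perturbation $\tLambda$ of $\Lambda$ with $\fnorm{\tLambda - \Lambda} = O(\veps)$ whose spectrum $\tlambda$ strictly violates the $k$-th majorization inequality. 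Strong Schur-Horn continuity then produces a symmetric matrix $\tB$ with diagonal $d$ and eigenvalues $\tlambda$; but the Schur direction of the Schur-Horn theorem applied to $\tB$ requires $\tlambda \prec d$, contradicting the violation.

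To construct such a $\tLambda$, let $M_{\min} = \{i : \lambda_i = \lambda_1\}$ and $M_{\max} = \{i : \lambda_i = \lambda_n\}$, which are disjoint nonempty subsets of $\{1, \ldots, n\}$. Define
\[
    \tlambda_i = \lambda_i + \veps/|M_{\min}| \text{ for } i \in M_{\min}, \quad
    \tlambda_i = \lambda_i - \veps/|M_{\max}| \text{ for } i \in M_{\max}, \quad
    \tlambda_i = \lambda_i \text{ otherwise}.
\]
Trace is preserved, $\fnorm{\tLambda - \Lambda} = O(\veps)$, and for small $\veps$ the sorted order is preserved: the first $|M_{\min}|$ positions of $\tlambda^\uparrow$ are the perturbed min-block entries, the last $|M_{\max}|$ positions are the perturbed max-block entries, and the middle entries are unchanged.

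The crucial calculation reads off $\sum_{i=1}^k \tlambda_i^\uparrow$ by a short case analysis on whether $|M_{\min}|$ reaches $k$ and whether $|M_{\max}|$ exceeds $n - k$. Disjointness ($|M_{\min}| + |M_{\max}| \leq n$) rules out both happening simultaneously. In the remaining three cases, the net increment of the sum of the smallest $k$ entries over $\sum_{i=1}^k \lambda_i$ equals $k\veps/|M_{\min}|$ (when $|M_{\min}| \geq k$), or $\veps$ (when $|M_{\min}| < k$ and $|M_{\max}| \leq n-k$), or $\veps(n-k)/|M_{\max}|$ (when $|M_{\min}| < k$ and $|M_{\max}| > n-k$). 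Each quantity is strictly positive thanks to $1 \leq k \leq n-1$, producing $\sum_{i=1}^k \tlambda_i^\uparrow > \sum_{i=1}^k d_i$, the desired violation.

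I expect the main obstacle to be the third subcase, where some decreased max-block entries leak into the first $k$ positions of the sorted spectrum. One must verify that the negative contribution $-(|M_{\max}| + k - n)\veps/|M_{\max}|$ from the leaked entries is still dominated by the $+\veps$ contribution from the min block, leaving the positive residual $\veps(n-k)/|M_{\max}|$; this relies precisely on the hypothesis $k \leq n-1$. Once the violation of majorization is secured, the contradiction with Schur's theorem applied to $\tB$ is immediate, concluding that $A$ must be scalar.
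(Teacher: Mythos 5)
Your proof is correct, and your use of the contrapositive plus a trace-preserving $O(\veps)$ perturbation that violates the $k$-th majorization inequality matches the paper's overall strategy. The paper's proof, however, uses a different perturbation: it centers the upward shift on the block of eigenvalues equal to $\lambda_i$ (the equality index), subtracting from the $\lambda_n$-block, and must split into two subcases depending on whether $\lambda_i < \lambda_n$ or $\lambda_i = \lambda_n$ (in the latter subcase it instead shifts the $\lambda_1$-block up). Your construction always perturbs the two extreme blocks $M_{\min}$ and $M_{\max}$, regardless of which index $k$ witnesses the equality, and trades the paper's subcase analysis for a three-way case analysis on the block sizes $|M_{\min}|$ and $|M_{\max}|$ relative to $k$ and $n-k$. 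Both approaches rely on disjointness of the extremal eigenvalue blocks (your observation $|M_{\min}|+|M_{\max}|\le n$ rules out the impossible combination) and both conclude via the Schur direction of the theorem. Your version is arguably a bit more uniform in that a single perturbation recipe handles every case; the paper's version makes the violated index $i$ more visibly the one being pushed above the bound. One minor remark: the paper's case (i), where $A$ fails to satisfy majorization at all, is vacuous since $A$ is a symmetric matrix and Schur's inequality guarantees $\lambda \prec d$; you correctly and implicitly skip it.
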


The second difference is the transformation of $\tLambda$, i.e., the
eigenvector matrix of $\tB$. In \cref{def:SHcont}, no restriction is
applied to the eigenvector matrix of $\tB$, whereas in
\cref{def:strongSHcont}, the eigenvector matrix is required to be an
$\veps$ perturbation of the original eigenvector matrix of $A$. The second
difference is not essential. The explicit expression $G_2 Q G_1$
simplifies our later proofs.

Next, we provide a few lemmas that prove specific types of matrices $A$
are strongly Schur-Horn continuous. These lemmas contribute to the final
proof of our main theorem.

\begin{lemma} \label{lem:irreducible_case}
    Any irreducible symmetric matrix $A\in \bbR^{n\times n}$ is strongly
    Schur-Horn continuous.
\end{lemma}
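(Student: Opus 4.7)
The plan is to take $G_1 = I$ and build $G_2$ as a product of small Givens rotations along a spanning tree of the off-diagonal adjacency graph of $A$. Setting $M_0 = Q \tLambda Q^\top$, the bound $\fnorm{\tLambda - \Lambda} = O(\veps)$ gives $\fnorm{M_0 - A} = O(\veps)$, so $\diag(M_0) = d + \eta$ with $\eta_i = O(\veps)$ for every $i$ and $\sum_i \eta_i = \tr(\tLambda) - \tr(\Lambda) = 0$. The task reduces to finding an orthogonal $G_2$ with $\fnorm{G_2 - I} = O(\veps^{1/2})$ whose action on $M_0$ corrects the diagonal back to $d$ without altering the spectrum.

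Since $A$ is irreducible, the graph $\mathcal{G}$ on $\{1, \ldots, n\}$ with edges $\{(i,j) : A_{ij} \neq 0, \, i \neq j\}$ is connected. Choose a spanning tree $T$ of $\mathcal{G}$ rooted at some vertex $r$, and enumerate the non-root vertices in a post-order traversal $i_1, \ldots, i_{n-1}$, so that each vertex appears after all of its descendants. For $k = 1, \ldots, n-1$, process $i_k$ by applying \cref{lem:givens} to the $2 \times 2$ principal submatrix of the current working matrix indexed by $\{i_k, p(i_k)\}$, where $p(i_k)$ is the parent of $i_k$ in $T$, choosing the rotation so that the $(i_k, i_k)$ entry becomes $d_{i_k}$. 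Extending this rotation trivially to an $n \times n$ Givens rotation $G^{(k)}$, set $G_2 = G^{(n-1)} \cdots G^{(1)}$ and $\tB = G_2 Q \tLambda Q^\top G_2^\top$.

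Correctness rests on an inductive invariant: immediately before processing $i_k$, the current matrix differs from $A$ by $O(\veps)$ in Frobenius norm, and the accumulated diagonal perturbation at every vertex is still $O(\veps)$. This holds for $M_0$. Under the invariant, the $(i_k, p(i_k))$ off-diagonal equals $A_{i_k, p(i_k)} + O(\veps)$, which is nonzero for small $\veps$ since $(i_k, p(i_k))$ is an edge of $\mathcal{G}$; hence Case 1 of \cref{lem:givens} applies with $f(\veps) = \Theta(\veps)$, producing rotation angle $\Theta(\veps)$ and matrix change $O(\veps)$, which preserves the invariant. Trace preservation of the $2 \times 2$ block transfers the error from $i_k$ onto $p(i_k)$, so the errors flow toward the root along $T$; after all $n-1$ rotations the root's diagonal entry becomes $d_r + \sum_i \eta_i = d_r$ for free. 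Summing contributions gives $\fnorm{G_2 - I} \leq \sum_k \fnorm{G^{(k)} - I} = O(\veps) = O(\veps^{1/2})$, verifying the definition of strong Schur-Horn continuity.

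The main obstacle is the inductive bookkeeping: one must simultaneously track that the tree off-diagonals stay bounded away from zero and that the accumulated diagonal error at each active vertex stays $O(\veps)$ across all $n-1$ rotations, so that Case 1 of \cref{lem:givens} continues to apply with rotation angle $\Theta(\veps)$ at every step. This is the only delicate part; once the invariant is established, the estimates plug straight into \cref{lem:givens} and give the required bound on $G_2$.
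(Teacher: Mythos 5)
Your proposal is correct and follows essentially the same route as the paper: form $B = Q\tLambda Q^\top$, take a spanning tree of the nonzero-pattern graph of $A$, peel leaves toward the root applying Case~1 of \cref{lem:givens} at each tree edge (the off-diagonal is $\Theta(1)$, so each rotation angle is $O(\veps)$), and let trace preservation correct the final vertex. Your post-order traversal of a rooted tree is the same as the paper's iterated leaf elimination, and taking $G_1=I$ with $G_2$ the accumulated rotation is exactly what the paper's $\tB = G_{n-1}\cdots G_1 B G_1^\top \cdots G_{n-1}^\top$ amounts to.
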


\begin{proof}
    Denote the eigendecomposition of $A$ as $A = Q \Lambda Q^\top$, where
    $Q$ is the eigenvectors of $A$ and $\Lambda$ is the diagonal
    eigenvalue matrix. The diagonal entries of $A$ is denoted as $d \in
    \bbR^n$. From $\fnorm{\tLambda-\Lambda}=O(\veps)$, we construct the
    perturbed matrix $B = Q \tLambda Q^\top$ with $\fnorm{B - A} =
    O(\veps)$. When $\veps$ is sufficiently small, we know that the
    nonzero entries in $A$ remain nonzero in $B$ and are $\Theta(1)$ with
    respect to $\veps$. Our proof starts from $B$ and constructs the
    desired $\tB$ step by step based on \cref{lem:givens}. In the end, the
    eigenvalues of $\tB$ are $\tLambda$ and the diagonal entries of $\tB$
    are the same as that of $A$.

    The underlying undirected graph of $A$ is a connected graph since $A$
    is symmetric and irreducible, where the graph is built according to
    the non-zero pattern of $A$. We construct a spanning tree $T$ covering
    all vertices in the graph. Starting from a leaf vertex $v_i$ in $T$,
    assume the parent vertex of $v_i$ is $v_j$. Taking the $2\times 2$
    principal submatrix of $B$ at the intersections of the $i$-th and
    $j$-th columns and rows, we obtain a symmetric matrix as in
    \cref{lem:givens},
    \begin{equation*}
        B^{(ij)} =
        \begin{bmatrix}
            B_{ii} & B_{ij} \\
            B_{ji} & B_{jj} \\
        \end{bmatrix},
    \end{equation*}
    where $B_{ii}$ is always in the $(1,1)$ entry of $B^{(ij)}$. The
    off-diagonal entries of $B^{(ij)}$ are nonzero and $\Theta(1)$ with
    respect to $\veps$. By the first scenario in \Cref{lem:givens}, there
    exists a Givens rotation matrix $G^{(ij)}$ such that the $(1,1)$ entry
    of $G^{(ij)} B^{(ij)} (G^{(ij)})^\top$ is the target diagonal value
    $d_i$. Also, $G^{(ij)}$ is close to an identity matrix when $\veps$ is
    sufficiently small, i.e., $\fnorm{G^{(ij)}-I} =
    \Theta(\veps^{\alpha_1})$ with $\alpha_1 \geq 1$. Embed $G^{(ij)}$
    into an $n \times n$ Givens rotation matrix $G_1$ with entries at the
    intersections of the $i$-th and $j$-th columns and rows. The $(i,i)$
    entry of $G_1 B G_1^\top$ is again $d_i$. Importantly, when $\veps$ is
    sufficiently small, this operation only changes $(i,i)$ and $(j,j)$
    entries along the diagonal of $B$ and preserves the connectivity in
    the graph and, hence, the spanning tree.~\footnote{New edges could be
    added to the graph. However, the spanning tree is still a spanning
    tree in the updated graph.} After this step, the $i$-th diagonal entry
    has been ``corrected'' and all later operations will not touch it
    anymore. Hence, we can see that the vertex $v_i$ has been eliminated
    from the graph and tree. Then, we repeat this process with the updated
    tree and the matrix $G_1 B G_1^\top$. Such a process could be repeated
    $n-1$ times and results $G_1, \ldots, G_{n-1}$ Givens rotation
    matrices. Note that the perturbation order of the diagonals is always
    $O(\veps)$ during the procedure, we have  
    $\fnorm{G_i - I}=\Theta(\veps^{\alpha_i})$ with $\alpha_i \geq 1$
    for $i =1, \ldots, n-1$. Finally, we obtain a symmetric matrix,
    \begin{equation*}
        \tB = G_{n-1} \cdots G_1 B G_1^\top \cdots G_{n-1}^\top,
    \end{equation*}
    whose $n-1$ diagonal entries are ``corrected'' during the process
    and all the perturbations are collected at the last diagonal entry,
    which is automatically corrected due to the last majorization
    relation. Eigenvalues of $\tB$ remain the same as $B$ during the
    similarity transformations. The difference between $\tB$ and $A$
    could be bounded as,
    \begin{equation*}
        \fnorm{\tB - A} \leq \fnorm{\tB - B} + \fnorm{B - A} = O(\veps).
    \end{equation*}
    Thus, $A$ is strongly Schur-Horn continuous.
\end{proof}

\begin{lemma} \label{lem:two_strongSH}
    Given $A_1\in \bbR^{n_1\times n_1}$ and $A_2\in \bbR^{n_2 \times n_2}$
    be two strongly Schur-Horn continuous matrices such that their
    spectrum windows have a nonzero measured intersection, i.e., 
    \begin{equation*}
        \mu(\omega(A_1) \cap \omega(A_2)) > 0,
    \end{equation*} 
    where $\mu(\cdot)$ is the Lebesgue measure over $\bbR$, then matrix $A
    = \begin{bmatrix} A_1 & \\ & A_2 \end{bmatrix}$ is also strongly
    Schur-Horn continuous.
\end{lemma}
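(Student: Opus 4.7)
The plan is to reduce the claim to the strong Schur-Horn continuity of the individual blocks $A_1$ and $A_2$. Let $A_k = Q_k \Lambda_k Q_k^\top$ be the eigendecomposition of $A_k$, so that $A = Q \Lambda Q^\top$ with $Q = \diag(Q_1, Q_2)$ and $\Lambda = \diag(\Lambda_1, \Lambda_2)$. Splitting $\tLambda$ according to the natural block structure gives $\tLambda = \diag(\tLambda_1, \tLambda_2)$ with $\fnorm{\tLambda_k - \Lambda_k} = O(\veps)$. If the trace mismatch $\delta := \tr(\tLambda_1) - \tr(\Lambda_1)$ vanishes, then $\tr(\tLambda_k) = \tr(\Lambda_k)$ for each $k$, and I can directly apply the strong Schur-Horn continuity of $A_1$ and $A_2$ to $\tLambda_1$ and $\tLambda_2$, assembling the block-diagonal factors into $G_1$ and $G_2$.

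The nontrivial case is $\delta \neq 0$, where a small amount of trace must first be transferred between the blocks. The nonzero-measure intersection of spectrum windows gives the strict inequalities $\lmax(A_1) > \lmin(A_2)$ and $\lmax(A_2) > \lmin(A_1)$. I select indices $i$ in the first block and $j$ in the second block with $\lambda_i = \lmax(A_1)$ and $\lambda_j = \lmin(A_2)$ if $\delta > 0$, and $\lambda_i = \lmin(A_1)$ and $\lambda_j = \lmax(A_2)$ if $\delta < 0$; in either case $|\lambda_i - \lambda_j| = \Theta(1)$ with a sign opposite to $\delta$. I then build a Givens rotation $G_1^{\mathrm{sw}}$ acting on positions $i$ and $j$ of $\tLambda$, with angle $\theta$ chosen so that $s^2 := \sin^2\theta = -\delta / (\tlambda_j - \tlambda_i) = O(\veps)$. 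This makes the sum of the first $n_1$ diagonal entries of $M := G_1^{\mathrm{sw}} \tLambda (G_1^{\mathrm{sw}})^\top$ equal to $\tr(\Lambda_1)$, while $\fnorm{G_1^{\mathrm{sw}} - I} = O(\veps^{1/2})$, and $M$ differs from $\tLambda$ only in the four entries at positions $(i,i), (i,j), (j,i), (j,j)$, with off-diagonal entry $M_{ij} = cs(\tlambda_j - \tlambda_i) = O(\veps^{1/2})$.

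The diagonal blocks $M_{11}, M_{22}$ of $M$ are themselves diagonal, satisfy $\tr(M_{kk}) = \tr(\Lambda_k)$, and lie within $O(\veps)$ of $\Lambda_k$ in Frobenius norm, so they are admissible perturbed eigenvalue matrices for the strong Schur-Horn continuity of $A_k$. Invoking that property yields orthogonal matrices $\hat G_1^{(k)}, \hat G_2^{(k)}$ with $\fnorm{\hat G_\ell^{(k)} - I} = O(\veps^{1/2})$ such that each $\hat G_2^{(k)} Q_k \hat G_1^{(k)} M_{kk} (\hat G_1^{(k)})^\top Q_k^\top (\hat G_2^{(k)})^\top$ has diagonal equal to $\diag(A_k)$. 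Setting $G_1 := \diag(\hat G_1^{(1)}, \hat G_1^{(2)}) \cdot G_1^{\mathrm{sw}}$ and $G_2 := \diag(\hat G_2^{(1)}, \hat G_2^{(2)})$, the $2\times 2$ block decomposition of $\tB := G_2 Q G_1 \tLambda G_1^\top Q^\top G_2^\top$ shows that its diagonal blocks produce exactly $\diag(A_1)$ and $\diag(A_2)$, while the off-diagonal blocks (carrying only the small $M_{ij}$-type entry) contribute nothing to the diagonal. Eigenvalues are preserved as $\tLambda$ because the whole transformation is a similarity, and $\fnorm{G_\ell - I} = O(\veps^{1/2})$ follows from the triangle inequality and the orthogonal invariance of the Frobenius norm.

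The main obstacle is the construction of $G_1^{\mathrm{sw}}$: the nonzero-measure overlap of spectrum windows has to deliver both the correct sign of $\tlambda_j - \tlambda_i$ and a uniform $\Theta(1)$ lower bound on its magnitude, so that $s^2 = O(\veps)$ and hence $\fnorm{G_1^{\mathrm{sw}} - I} = O(\veps^{1/2})$; otherwise the bookkeeping with the block-wise strong Schur-Horn continuity could degrade the final rate. Once this is in place, the rest is a careful composition of the orthogonal factors.
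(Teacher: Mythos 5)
Your proof is correct and takes essentially the same route as the paper: handle the trace mismatch $\delta$ between blocks with a single Givens rotation coupling $\lmax$ of one block to $\lmin$ of the other (whose admissibility and $O(\veps^{1/2})$ proximity to the identity both rest on the $\Theta(1)$ spectral-window overlap), then invoke strong Schur-Horn continuity of $A_1$ and $A_2$ on the corrected diagonal blocks, and assemble the orthogonal factors block-diagonally. The only cosmetic difference is that you solve for $s^2$ directly rather than citing \cref{lem:givens} (second scenario), which the paper does.
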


\begin{proof}    
    Denote the eigendecomposition of $A_i$ as $A_i=Q_i \Lambda_i Q_i
    ^\top$ for $i=1,2$, where $Q_i$ is the eigenvector matrix and
    $\Lambda_i$ is the diagonal eigenvalue matrix, respectively. Given a
    perturbation of eigenvalues of $A$, denoted as $\tLambda =
    \begin{bmatrix} \tLambda_1 & \\ & \tLambda_2 \end{bmatrix}$ with
    $\tr(\tLambda_1) = \tr(\Lambda_1) + h(\veps)$ and $\tr(\tLambda_2) =
    \tr(\Lambda_2) - h(\veps)$. Since $\fnorm{\Lambda -
    \tLambda}=O(\veps)$, if $h(\veps)\neq 0$, we must have
    $h(\veps)=O(\veps)=\Theta(\veps^{\alpha})$ with $\alpha \geq 1$.

    Without loss of generality, we assume that $\lmin(A_1) \leq
    \lmin(A_2)$. By the assumption $\mu(\omega(A_1) \cap \omega(A_2)) >
    0$, spectrum windows of $A_1$ and $A_2$ admit either of the following
    cases,
    \begin{equation*}
        \lmin(A_1) \leq \lmin(A_2) < \lmax(A_1) \leq \lmax(A_2),
    \end{equation*}
    or
    \begin{equation*}
        \lmin(A_1) \leq \lmin(A_2) < \lmax(A_2) \leq \lmax(A_1).
    \end{equation*}
    In both cases, we have 
    \begin{equation*}
        \lmin(A_1) < \lmax(A_2) \quad \text{and}
        \quad \lmin(A_2) < \lmax(A_1),
    \end{equation*}
    and hence,
    \begin{equation}
        \label{eq:key-ineq-two-strongSH}
        \lmin(\tLambda_1) < \lmax(\tLambda_2) \quad \text{and}
        \quad \lmin(\tLambda_2) < \lmax(\tLambda_1),
    \end{equation}
    when $\veps$ is sufficiently small. Next, we split the discussion
    based on the sign of $h(\veps)$.
    
    If $h(\veps) > 0$, by \cref{lem:givens}, one can apply a Givens
    rotation between $\lmax(\tLambda_1)$ and $\lmin(\tLambda_2)$ to
    compensate $-h(\veps)$,
    \begin{equation*}
        \begin{bmatrix}
            \lmax(\tLambda_1) & 0 \\
            0 & \lmin(\tLambda_2)
        \end{bmatrix} \rightarrow
        \begin{bmatrix}
            \lmax(\tLambda_1) - h(\veps) & * \\
            * & \lmin(\tLambda_2) + h(\veps)
        \end{bmatrix},
    \end{equation*}
    which is the second scenario in \cref{tab:givens}. The rotation is a
    perturbation of an identity whose rotation angle $\theta =
    \Theta(\veps^{\alpha/2}) = O(\veps^{1/2})$ since $\alpha \geq 1$. We
    embed the 2-by-2 rotation matrix into a rotation matrix $G_0$ of the
    same size as $A$, which is also a perturbation of an identity and
    $\fnorm{G_0 - I}=O(\veps^{1/2})$.

    If $h(\veps) < 0$, by \cref{lem:givens}, one can apply a Givens
    rotation between $\lmin(\tLambda_1)$ and $\lmax(\tLambda_2)$ to
    compensate $-h(\veps)$,
    \begin{equation*}
        \begin{bmatrix}
            \lmin(\tLambda_1) & 0 \\
            0 & \lmax(\tLambda_2)
        \end{bmatrix} \rightarrow
        \begin{bmatrix}
            \lmin(\tLambda_1) - h(\veps) & * \\
            * & \lmax(\tLambda_2) + h(\veps)
        \end{bmatrix},
    \end{equation*}
    which is the second scenario in \cref{tab:givens}. The rotation is a
    perturbation of an identity whose rotation angle $\theta =
    \Theta(\veps^{\alpha/2}) = O(\veps^{1/2})$ since $\alpha \geq 1$. We
    embed the 2-by-2 rotation matrix into a rotation matrix $G_0$ of the
    same size as $A$, which is also a perturbation of an identity and
    $\fnorm{G_0 - I} = O(\veps^{1/2})$.
    
    If $h(\veps) = 0$, we simply set $G_0 = I$ and hence $\fnorm{G_0 - I}
    = O(\veps^{1/2})$.

    After the diagonal compensation, we denote the block form of $ G_0
    \tLambda G_0^\top$ as,
    \begin{equation*}
        G_0 \tLambda G_0^\top = 
        \begin{bmatrix}
            \Lambda_1' & E_{12} \\ E_{21} & \Lambda_2'
        \end{bmatrix}.
    \end{equation*}
    For diagonal blocks, $\Lambda_1'$ and $\Lambda_2'$ are diagonal
    matrices being $O(\veps)$ perturbation of $\Lambda_1$ and $\Lambda_2$
    respectively, satisfying $\tr(\Lambda_1') = \tr(\Lambda_1) = \tr(A_1)$
    and $\tr(\Lambda_2') = \tr(\Lambda_2) = \tr(A_2)$. Off-diagonal blocks
    $E_{12}$ and $E_{21}$ are $O(\veps^{1/2})$ perturbations of zero
    matrices.

    Since $A_1$ and $A_2$ are strongly Schur-Horn continuous, there
    exists
    \begin{equation*}
        B_1 = G_{12} Q_1 G_{11} \Lambda_1' G_{11}^\top Q_1^\top G_{12}^\top
        \quad \text{and} \quad
        B_2 = G_{22} Q_2 G_{21} \Lambda_2' G_{21}^\top Q_2^\top G_{22}^\top
    \end{equation*}
    such that $\diag(B_i)=\diag(A_i)$ and $G_{ij}$s being
    $O(\veps^{1/2})$ perturbation of identity matrices with
    $\fnorm{G_{ij} - I}=O(\veps^{1/2})$ for $i, j=1, 2$. Introducing
    \begin{equation*}
        G_1 = \begin{bmatrix}
            G_{11} &  \\  & G_{21}
        \end{bmatrix} G_0, \quad 
        Q = \begin{bmatrix}
            Q_1 & \\  & Q_2
        \end{bmatrix}, \quad 
        G_2 = \begin{bmatrix}
            G_{12} & \\ & G_{22}
        \end{bmatrix},
    \end{equation*}
    we obtain
    \begin{equation*}
        \tB = G_2 Q G_1 \tLambda G_1^\top Q^\top G_2^\top,
    \end{equation*}
    which has eigenvalues being $\tLambda$ and diagonal entries being the
    same as that of $A$. Since all $G_{ij}$s and $G_0$ are orthogonal
    matrices close to identity matrices in order $O(\veps^{1/2})$
    under the Frobenius norm, $G_1$ and $G_2$ are also orthogonal matrices
    being $O(\veps^{1/2})$ perturbations of identity matrices, i.e.,
    $\fnorm{G_i - I}=O(\veps^{1/2})$ for $i=1, 2$. Hence, we conclude that
    $A$ is strongly Schur-Horn continuous.
\end{proof}

\begin{lemma} \label{lem:strongSH_diag}
    Let $A_1\in \bbR^{n\times n}$ be a strongly Schur-Horn continuous
    matrix with spectrum window such that $\mu(\omega(A_1)) > 0$, then
    for any $d_2\in \omega(A_1)^{\circ}$, matrix $A = \begin{bmatrix}
    A_1 & \\ & d_2 \end{bmatrix}$ is strongly Schur-Horn continuous.
\end{lemma}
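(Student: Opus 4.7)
The plan is to mimic the structure of the proof of \cref{lem:two_strongSH}, treating the $1\times 1$ diagonal block $d_2$ as a degenerate second strongly Schur-Horn continuous factor and using the interior condition $d_2 \in \omega(A_1)^\circ$ to enable the trace-redistribution step. Let $A_1 = Q_1 \Lambda_1 Q_1^\top$ be the eigendecomposition of $A_1$, so the eigenvalue matrix of $A$ is $\Lambda = \begin{bmatrix} \Lambda_1 & \\ & d_2 \end{bmatrix}$. Given an eigenvalue perturbation $\tLambda = \begin{bmatrix} \tLambda_1 & \\ & \td_2 \end{bmatrix}$ satisfying $\fnorm{\tLambda - \Lambda} = O(\veps)$ and $\tr(\tLambda) = \tr(\Lambda)$, set $h(\veps) = \tr(\tLambda_1) - \tr(\Lambda_1) = d_2 - \td_2 = O(\veps)$.

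First I would construct a single Givens rotation $G_0$ that pushes the $(n+1,n+1)$ entry from $\td_2$ back to $d_2$ by borrowing $h(\veps)$ from an eigenvalue of $\tLambda_1$ on the opposite side of $d_2$. Because $d_2 \in \omega(A_1)^\circ$, there exist eigenvalues of $A_1$ strictly above and strictly below $d_2$; for sufficiently small $\veps$ the corresponding perturbed eigenvalues remain on the same sides. Depending on the sign of $h(\veps)$, I pick an index $j \leq n$ with $\tlambda_j > d_2$ (when $h(\veps) > 0$) or $\tlambda_j < d_2$ (when $h(\veps) < 0$), and apply the second scenario ($b_{12} = 0$, $d_1 \neq d_2$) of \cref{lem:givens} to the $2\times 2$ principal block on indices $(j,n+1)$, targeting $(n+1,n+1) = d_2$. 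This yields a rotation angle $\theta = \Theta(\veps^{1/2})$ and hence $\fnorm{G_0 - I} = O(\veps^{1/2})$, and the transformed matrix has the block form $G_0 \tLambda G_0^\top = \begin{bmatrix} \Lambda_1' & e \\ e^\top & d_2 \end{bmatrix}$, where $\Lambda_1'$ is still diagonal, $\fnorm{\Lambda_1' - \Lambda_1} = O(\veps)$, $\tr(\Lambda_1') = \tr(\Lambda_1)$, and $e \in \bbR^n$ is an $O(\veps^{1/2})$ vector supported on index $j$ (the special case $h(\veps) = 0$ is handled by taking $G_0 = I$).

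With $\Lambda_1'$ now a trace-preserving $O(\veps)$ perturbation of the eigenvalues of $A_1$, the strong Schur-Horn continuity of $A_1$ yields orthogonal matrices $G_{11}, G_{12}$ with $\fnorm{G_{1i} - I} = O(\veps^{1/2})$ and $B_1 = G_{12} Q_1 G_{11} \Lambda_1' G_{11}^\top Q_1^\top G_{12}^\top$ satisfying $\diag(B_1) = \diag(A_1)$. Setting $Q = \begin{bmatrix} Q_1 & \\ & 1 \end{bmatrix}$, $G_1 = \begin{bmatrix} G_{11} & \\ & 1 \end{bmatrix} G_0$, and $G_2 = \begin{bmatrix} G_{12} & \\ & 1 \end{bmatrix}$, a direct block computation shows that $\tB = G_2 Q G_1 \tLambda G_1^\top Q^\top G_2^\top$ has top-left block $B_1$ and bottom-right entry $d_2$, so $\diag(\tB) = \diag(A)$; the factors $G_1, G_2$ remain $O(\veps^{1/2})$ perturbations of identity as required by \cref{def:strongSHcont}.

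The main delicate point is verifying the hypothesis of \cref{lem:givens} in the first Givens step. Translated to its notation after matching roles (placing $\td_2$ at $(1,1)$ with target $d_2$, and $\tlambda_j$ at $(2,2)$ with target $\tlambda_j - h(\veps)$), the condition $b_{12}^2 + f(\veps)(d_2 - d_1 + g(\veps)) \geq 0$ reduces to $h(\veps)(\tlambda_j - d_2) \geq 0$, which is the only place where the interior hypothesis $d_2 \in \omega(A_1)^\circ$ is used; everything else is routine bookkeeping paralleling \cref{lem:two_strongSH}.
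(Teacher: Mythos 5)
Your proposal is correct and follows the same approach the paper indicates: it treats $d_2$ as a degenerate second block and reruns the argument of \cref{lem:two_strongSH}, using the interior condition $d_2\in\omega(A_1)^\circ$ (equivalently $\lmin(A_1)<d_2<\lmax(A_1)$) to guarantee an eigenvalue of $\tLambda_1$ on the correct side of $d_2$ so that the discriminant condition of \cref{lem:givens} holds, exactly as the paper says is the key analogue of \eqref{eq:key-ineq-two-strongSH}. The only cosmetic difference is that you allow any $\tlambda_j$ on the appropriate side of $d_2$ rather than specifically $\lmax(\tLambda_1)$ or $\lmin(\tLambda_1)$, but your verification that the hypothesis of \cref{lem:givens} reduces to $h(\veps)(\tlambda_j-d_2)\ge 0$ is accurate, so this looser choice is equally valid.
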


The proof of \cref{lem:strongSH_diag} is similar to that of
\cref{lem:two_strongSH}. Note that from $d_2 \in \omega(A_1)^{\circ}$ we
have $\lmin(A_1) < d_2 < \lmax(A_1)$, which is the analogy of
\eqref{eq:key-ineq-two-strongSH} in the proof of \cref{lem:two_strongSH}.
Once we obtain these inequalities, the rest of the proofs are identical.
Hence, we omit the detail for simplicity.

\begin{lemma} \label{lem:block_diag_sSH}
    Let $A\in \bbR^{n\times n}$ be a symmetric matrix. There exists a
    permutation matrix $P$ such that the permuted matrix $P A P^\top$
    admits a block diagonal structure,
    \begin{equation*}
        P A P^\top =
        \begin{bmatrix}
            A_1 & & \\
            & \ddots & \\
            & & A_p
        \end{bmatrix},
    \end{equation*}
    and diagonal blocks $\{A_i\}_{i=1}^p$ are either scalars or
    non-scalar strongly Schur-Horn continuous matrices. Their
    eigenvalues are ordered, i.e.,
    \begin{equation} \label{eq:lem_block_diag_sSH}
        \lmin(A_1) \leq \lmax(A_1) \leq \lmin(A_2) \leq \lmax(A_2) \leq
        \cdots \leq \lmin(A_p) \leq \lmax(A_p).
    \end{equation}
\end{lemma}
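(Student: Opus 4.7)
The plan is to first perform a permutation realizing the irreducible decomposition of $A$, and then choose a block partition of the permuted matrix using iterative applications of \cref{lem:irreducible_case}, \cref{lem:two_strongSH}, and \cref{lem:strongSH_diag}.

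Specifically, I would form the undirected graph on $\{1,\ldots,n\}$ whose edges are the pairs $(i,j)$ with $i\neq j$ and $A_{ij}\neq 0$, and choose a permutation that places each connected component into contiguous indices. The resulting block diagonal form has finest-level diagonal blocks that are either $1\times 1$ scalars or irreducible symmetric matrices of size at least two. By \cref{lem:irreducible_case}, every such irreducible block is strongly Schur-Horn continuous, and since it must contain at least one nonzero off-diagonal entry, it is automatically non-scalar. Hence the starting partition already satisfies the ``scalar-or-non-scalar-strongly-SHC'' dichotomy required by the statement.

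Next I would iteratively coarsen the block partition using two merging rules on adjacent pairs of blocks (where we are free to permute blocks at any time, since block permutations commute). Rule (i): if two blocks $A_i, A_j$ are both non-scalar strongly Schur-Horn continuous and $\mu(\omega(A_i)\cap\omega(A_j))>0$, combine them into a single block, which remains non-scalar and strongly Schur-Horn continuous by \cref{lem:two_strongSH}. Rule (ii): if a $1\times 1$ scalar has value lying in $\omega(A_j)^\circ$ for an adjacent non-scalar strongly Schur-Horn continuous block $A_j$, absorb it into $A_j$ via \cref{lem:strongSH_diag}. Preservation of the non-scalar property under each rule follows because the merged spectrum window always contains a non-degenerate sub-interval, forcing at least two distinct eigenvalues. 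Each coarsening strictly decreases the number of blocks, so the procedure terminates. After termination I would sort all remaining blocks along the real line by minimum eigenvalue (with ties broken by placing scalars $c$ before a non-scalar block $B$ whenever $c=\lmin(B)$), which yields the ordering \eqref{eq:lem_block_diag_sSH}.

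The main obstacle I anticipate is purely bookkeeping: verifying after termination that the inequality $\lmax(A_i)\leq\lmin(A_{i+1})$ genuinely holds across each type of adjacency — non-scalar/non-scalar, non-scalar/scalar, and scalar/scalar. In each case one must trace back to the termination criterion — the corresponding overlap hypothesis in rule (i) or (ii) must have failed — and use the fact that any non-scalar strongly Schur-Horn continuous block has $\lmin < \lmax$, which follows from \cref{prop:strongSH-major}. This is a clean case analysis rather than a substantive new argument, but it is the least transparent part of the proof.
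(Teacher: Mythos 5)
Your proposal is correct and rests on the same ingredients as the paper's proof: the permutation to an irreducible block decomposition, \cref{lem:irreducible_case} to make each non-scalar irreducible block strongly Schur-Horn continuous, and \cref{lem:two_strongSH} together with \cref{lem:strongSH_diag} to coarsen the partition. The one place where you genuinely deviate is procedural: the paper first sorts the irreducible blocks $B_1,\dots,B_q$ lexicographically by $(\lmin,\lmax)$ and then performs a single left-to-right greedy absorption pass, which makes the inequality $\lmax(A_j)\le\lmin(B_i)=\lmin(\diag(B_i,\dots,B_q))$ fall out automatically at each termination point; you instead merge in arbitrary order and sort afterward, deferring the ordering check to the end. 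The check you flag as the least transparent step does go through cleanly, for exactly the reason you anticipate: once no further merge applies, any two surviving non-scalar blocks have spectrum windows meeting in a set of measure zero, hence (being nondegenerate closed intervals, by \cref{lem:irreducible_eig_gap} or \cref{prop:strongSH-major}) in at most one point, and any surviving scalar lies outside the open interior of every non-scalar window; so all terminal windows are pairwise non-overlapping except possibly at endpoints and can be linearly ordered by $\lmin$, which is \eqref{eq:lem_block_diag_sSH}. The paper's sort-first organization avoids that case analysis, but the mathematical content is the same.
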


\begin{proof}
    Given a symmetric matrix $A$, let $q$ be the number of irreducible
    submatrices in $A$ (including scalars). We sort these submatrices in
    ascending order with their smallest eigenvalue being the first index
    and their largest eigenvalues being the second index. Hence, there
    exists a permutation matrix $P$ such that,
    \begin{equation*}
        P A P^\top =
        \begin{bmatrix}
            B_1 & & \\
            & \ddots & \\
            & & B_q
        \end{bmatrix},
    \end{equation*}
    and the smallest and largest eigenvalues of $i$-th and $j$-th blocks
    for $i < j$ admit either,
    \begin{enumerate}[(i)]
        \item $\lmin(B_i) < \lmin(B_j)$; or
        \item $\lmin(B_i) = \lmin(B_j)$ and $\lmax(B_i) \leq \lmax(B_j)$.
    \end{enumerate}
    Following these two eigenvalue conditions, we obtain the inequality
    immediately,
    \begin{equation*}
        \lmin(\diag(B_i, B_{i+1}, \dots, B_q)) = \lmin(B_i).
    \end{equation*}
    Next, we describe a procedure to collect contiguous
    $B_i$s and denote them as $A_j$ such that the smallest and largest
    eigenvalues of $A_j$s are ordered.

    We construct $A_1$ step-by-step and start with $A_1 = B_1$. By the
    construction of $P A P^\top$, we know that $B_1$ is either a diagonal
    matrix or an irreducible matrix. If $B_1$ is an irreducible matrix,
    then by \cref{lem:irreducible_case}, $B_1$ is strongly Schur-Horn
    continuous. Hence, $A_1 = B_1$ is either a scalar or a strongly
    Schur-Horn continuous matrix.
    
    We first consider the case that $A_1$ is a scalar and $A_1$ by itself
    is a block. Combined with the construction of $B_i$s, we have,
    \begin{equation} \label{eq:blocksSH-ineq-diag}
        \lmax(A_1) = B_1 \leq \lmin(B_2)
        = \lmin(\diag(B_2, \dots, B_q)).
    \end{equation}

    The second case is that $A_1$ is a strongly Schur-Horn continuous
    matrix. Note that according to \cref{lem:irreducible_eig_gap},
    irreducible matrix $B_1$ satisfies that $\mu(\omega(B_1))>0$. In
    this case, we expand $A_1 = \diag(B_1, \dots, B_i)$ only if either
    $B_i$ is an irreducible matrix and $\mu(\omega(A_1) \cap
    \omega(B_i)) > 0$; or $B_i$ is a scalar and $B_i \in
    \omega(A_1)^\circ$. By \cref{lem:two_strongSH} and
    \cref{lem:strongSH_diag} respectively, the expanded matrix $A_1 =
    \diag(B_1, \dots, B_i)$ is strongly Schur-Horn continuous. If the
    expansion terminates, we have $\mu(\omega(A_1) \cap \omega(B_i)) =
    0$ when $B_i$ is an irreducible matrix, and $\lmax(A_1) \leq B_i$
    when $B_i$ is a scalar. Combined with the construction of $A_1$ and
    $B_i$s, we have,
    \begin{equation} \label{eq:blocksSH-ineq-sSH}
        \lmax(A_1) \leq \lmin(B_i)
        = \lmin(\diag(B_i, \dots, B_q)),
    \end{equation}
    with $A_1=\diag(B_1, \ldots, B_{i-1})$ and $i > 1$. 

    When the expansion of $A_1$ terminates at block $B_i$, the
    construction of $A_2$ starts from $B_i$ following the same procedure
    as above. Eventually, we obtain $A_1, \dots, A_p$ and the inequality
    \eqref{eq:lem_block_diag_sSH} follows from
    \eqref{eq:blocksSH-ineq-diag} and \eqref{eq:blocksSH-ineq-sSH}
    directly.
\end{proof}

\begin{proposition}
    If $A\in \bbR^{n\times n}$ is a symmetric matrix satisfying the
    strict majorization condition \eqref{eq:strict-major}, then $A$ is
    strongly Schur-Horn continuous.
\end{proposition}

\begin{proof}
According to \cref{lem:block_diag_sSH}, after a permutation $P$, the
matrix admits a block diagonal structure, i.e., $P A P^\top =
\diag\left(A_1, \cdots, A_p\right)$, such that their eigenvalues are
ordered. The majorization conditions hold for every symmetric block $A_i$
and we have $\lmin(A_i) \leq \dmin(A_i)$ and $\dmax(A_i) \leq
\lmax(A_i)$.\footnote{Notations $\dmin(\cdot)$ and $\dmax(\cdot)$ denote
the minimum and maximum of diagonal entries of a matrix, respectively.}
Combined with \eqref{eq:lem_block_diag_sSH}, we have 
\begin{equation*}
    \dmin(A_1) \leq \dmax(A_1) \leq \dmin(A_2) \leq \dmax(A_2) \leq
    \cdots \leq \dmin(A_p) \leq \dmax(A_p).
\end{equation*}
Since $P A P^\top$ is a block diagonal matrix, we have
\begin{align*}
    \tr(\Lambda_1) & = \tr(A_1), \\
    \tr(\Lambda_1) + \tr(\Lambda_2) & = \tr(A_1) + \tr(A_2), \\
     & ~\vdots \\
    \tr(\Lambda_1) + \cdots + \tr(\Lambda_{p-1}) &
    = \tr(A_1) + \cdots + \tr(A_{p-1}), \\
    \tr(\Lambda_1) + \cdots + \tr(\Lambda_{p-1}) + \tr(\Lambda_p) &
    = \tr(A_1) + \cdots + \tr(A_{p-1}) + \tr(A_p),
\end{align*}
where $\Lambda_1, \ldots, \Lambda_p$ are diagonal eigenvalue submatrices
corresponding to the block structures in $PAP^\top$. The relations above
contradict with the strict majorization condtion \eqref{eq:strict-major}
when $p \geq 2$. Hence, there is only one block in matrix $A$, which is
either a scalar or non-scalar strongly Schur-Horn continuous. Note that a
scalar is also strongly Schur-Horn continuous. Thus matrix $A$ must be
strongly Schur-Horn continuous. 
\end{proof}

\section{Schur-Horn Continuity of Symmetric Matrices}
\label{sec:SHC_symmetric}

Now we turn to the proof of our main result, \cref{thm:continuitySH}.

\begin{proof}
    Our proof is similar to that of \cref{thm:diagonal_case}, where
    diagonal scalars are replaced by either scalars or strongly Schur-Horn
    continuous blocks.

    By \cref{lem:block_diag_sSH}, the symmetric matrix $A$ admits a block
    diagonal form after a permutation. Without loss of generality, we
    assume $A$ is a block diagonal matrix, $A = \diag(A_1,
    \cdots, A_p)$, where $A_i$ is either a scalar or a strongly Schur-Horn
    continuous matrix. The smallest and largest eigenvalues of these
    blocks are ordered as in \eqref{eq:lem_block_diag_sSH}. The
    majorization conditions hold for every symmetric block $A_i$ and we
    have $\lmin(A_i) \leq \dmin(A_i)$ and $\dmax(A_i) \leq \lmax(A_i)$.
    Combined with \eqref{eq:lem_block_diag_sSH}, we have 
    \begin{equation*}
        \dmin(A_1) \leq \dmax(A_1) \leq \dmin(A_2) \leq \dmax(A_2) \leq
        \cdots \leq \dmin(A_p) \leq \dmax(A_p).
    \end{equation*}

    The perturbed eigenvalues are denoted as $\tlambda_i$. When the
    eigenvalues have a gap, i.e., $\lambda_i < \lambda_{i+1}$, the
    perturbed eigenvalues keep the ordering, i.e., $\tlambda_i \leq
    \tlambda_{i+1}$ for sufficiently small $\veps$. When eigenvalues are
    identical, $\lambda_i = \lambda_{i+1}$, the perturbed eigenvalues are
    ordered based on their perturbations. In another point of view, we
    could regard the perturbations on identical eigenvalues as sorted
    perturbations. Therefore, for $\veps>0$ sufficiently small, we could
    obtain the block-wise majorization conditions,
    \begin{align*}
        \tr(\tLambda_1) & \leq \tr(A_1), \\
        \tr(\tLambda_1) + \tr(\tLambda_2) & \leq \tr(A_1) + \tr(A_2), \\
         & ~\vdots \\
        \tr(\tLambda_1) + \cdots + \tr(\tLambda_{p-1}) &
        \leq \tr(A_1) + \cdots + \tr(A_{p-1}), \\
        \tr(\tLambda_1) + \cdots + \tr(\tLambda_{p-1}) + \tr(\tLambda_p) &
        = \tr(A_1) + \cdots + \tr(A_{p-1}) + \tr(A_p),
    \end{align*}
    where $\tLambda_1, \ldots, \tLambda_p$ are perturbed diagonal
    eigenvalue submatrices corresponding to the block structure as in $A$.
    We denote the block-wise perturbations as $h_i(\veps) =
    \tr(\tLambda_i) - \tr(A_i)$, and similarly have,
    \begin{equation*}
        \begin{split}
            & h_1(\veps) + h_2(\veps) + \cdots + h_i(\veps) \leq 0, \quad
            i = 1, 2, \dots, p-1, \text{ and} \\
            & h_1(\veps) + \cdots + h_p(\veps) = 0.
        \end{split}
    \end{equation*}

    We also maintain a priority queue with diagonal block indices as
    elements. For any diagonal block index $i$ in the queue, we ensure
    that $\th_i(\veps)$ is a negative perturbation $\th_i(\veps) < 0$,
    where $\th_i(\veps)$ denotes the updated perturbation throughout the
    procedure. Starting from the first diagonal block, we check and
    enqueue the index $i = 1, 2, \dots$ in order if $\th_i(\veps) < 0$ and
    skip the index $i$ if $\th_i(\veps) = 0$. We keep on checking and
    enqueuing indices until the first index $j$ such that $\th_j(\veps) >
    0$. If $j$ does not exist, then by the last equation in the block-wise
    majorization relation, we know that the queue is also empty and the
    diagonal blocks of the perturbed $A$ have all been corrected, i.e.,
    \begin{equation} \label{eq:block-correct-perturbation}
        \th_i(\veps) = 0, \quad i = 1, \dots, p.
    \end{equation}
    Otherwise, we obtain a $j$ and the updated perturbations satisfy,
    \begin{equation} \label{eq:block-updated-majorization}
        \th_1(\veps) + \cdots + \th_j(\veps) = h_1(\veps) + \cdots +
        h_j(\veps) \leq 0.
    \end{equation}
    This condition is satisfied in the first step and we will verify it
    after each step. By the $j$-th block-wise majorization relation, the
    queue is guaranteed to be non-empty. We pop an index from the queue
    and denote it as $i$. There are two scenarios here based on the
    block size of $A_i$ and $A_j$.

    \begin{enumerate}[(i)]
        \item Both $A_i$ and $A_j$ are one dimensional, i.e., $A_i = d_i$
        and $A_j = d_j$. This scenario is the same as
        \cref{thm:diagonal_case} for diagonal matrix and we have
        \eqref{eq:rotate_diag_case}. Thus, if $d_i < d_j$, the Givens
        rotation itself is close to identity; if $d_i = d_j$, while the
        Givens rotation could be away from identity, the rotated matrix is
        close to the original block diagonal matrix.   

        \item At least one of $A_i$ and $A_j$ is non-diagonal strongly
        Schur-Horn continuous matrix. By \cref{prop:strongSH-major}, we
        have either $\lmin(A_i) < \lmax(A_i)$ or $\lmin(A_j) <
        \lmax(A_j)$, or both. Combined with the ordering of eigenvalues,
        \eqref{eq:lem_block_diag_sSH}, the inequality $\lmin(A_i) <
        \lmax(A_j)$ holds. Thus, by \cref{lem:givens}, we can perform a
        Givens rotation close to the identity between $\lmin(A_i)$ and
        $\lmax(A_j)$ to revert $-\th_i(\veps)$, making the sum of the $i$-th
        block perturbations equals zero, i.e.,
        \begin{equation*}
            \begin{bmatrix}
                \lmin(\tLambda_i) & 0 \\
                0 & \lmax(\tLambda_j)
            \end{bmatrix} \rightarrow
            \begin{bmatrix}
                \lmin(\tLambda_i) - \th_i(\veps) & * \\
                * & \lmax(\tLambda_j) + \th_i(\veps)
            \end{bmatrix}.
        \end{equation*}
    \end{enumerate}

    We now have three cases: i) $\th_i(\veps) + \th_j(\veps) < 0$; ii)
    $\th_i(\veps) + \th_j(\veps) = 0$; and iii) $\th_i(\veps) +
    \th_j(\veps) > 0$. In case i), we enqueue $j$ and start checking the
    following indices. In case ii), we skip $j$ and start checking the
    indices after $j$. In case iii), we pop another index from the queue
    and repeat the correction procedure. In all cases, the updated total
    perturbations at the $i$-th block and the $j$-th block are $0$ and
    $\th_i(\veps) + \th_j(\veps)$, respectively. Hence
    \eqref{eq:block-updated-majorization} holds for all indices greater
    or equal to $j$. Then, the majorization relations guarantee that the
    procedure ends if and only if all diagonal blocks have been
    corrected, i.e., equations \eqref{eq:block-correct-perturbation}
    hold. Notice that all above corrections are on the diagonal
    eigenvalue matrix $\tLambda$. The corrected matrix $\tLambda^{(p)}$
    admits form,
    \begin{equation*}
        \tLambda^{(p)} =
        \begin{bmatrix}
            \tLambda_1^{(p)} & \cdots & * \\
            \vdots & \ddots & \vdots \\
            * & \cdots & \tLambda_p^{(p)}
        \end{bmatrix},
    \end{equation*}
    with $\tLambda_i^{(p)}$ being diagonal matrices satisfying
    $\tr(\tLambda_i^{(p)})=\tr(A_i)$ for $i = 1, \dots, p$. The diagonals
    of $\tLambda^{(p)}$ are always $O(\veps)$ perturbation to diagonals of
    $\Lambda$ during the trace adjustment between diagonal blocks, i.e.,
    \begin{equation*}
        \fnorm{\tLambda_i^{(p)} - \Lambda_i}=O(\veps), \quad i=1,\ldots,p.
    \end{equation*}
    Furthermore, analog to the proof of \cref{thm:diagonal_case}, it
    yields that all the off-diagonals of $\tLambda^{(p)}$ are
    $O(\veps^{1/2})$. Thus, the distance between $\tLambda$ and
    $\tLambda^{(p)}$ obeys,
    \begin{equation} \label{eq:tLambda-block-estimate}
        \fnorm{\tLambda - \tLambda^{(p)}} = O(\veps^{1/2}).
    \end{equation}

    If the $i$-th diagonal block is a scalar, the correction procedure
    guarantees the diagonal entry is correct. If the $i$-th diagonal block
    is a strongly Schur-Horn continuous matrix, then the
    \cref{def:strongSHcont} ensures the existence of $G_{i1}$ and $G_{i2}$
    close to identity and $\tB_i = G_{i2} Q_i G_{i1} \tLambda_i^{(p)}
    G_{i1}^\top Q_i^\top G_{i2}^\top$ has the same diagonal entries as
    $A_i$, where $Q_i$ is the eigenvector matrix of $A_i$. Assembling
    $\{Q_i\}$, $\{G_{i1}\}$, and $\{G_{i2}\}$ together, we denote them as,
    \begin{equation*}
        G_1 = \begin{bmatrix}
            G_{11} &  &  \\
             & \ddots & \\
             &  & G_{p1}
        \end{bmatrix}, \quad 
        Q = \begin{bmatrix}
            Q_{1} &  &  \\
             & \ddots & \\
             &  & Q_{p}
        \end{bmatrix}, \quad 
        G_2 = \begin{bmatrix}
            G_{12} &  &  \\
             & \ddots & \\
             &  & G_{p2}
        \end{bmatrix},
    \end{equation*}
    where $G_{i1} = Q_i = G_{i2} = 1$ if $A_i$ is a scalar. By
    construction, we know that $G_1$ and $G_2$ are close to the identity
    matrix with $\fnorm{G_i - I}=O(\veps^{1/2})$ for $i=1, 2$. The matrix
    $\tB = G_2 Q G_1 \tLambda^{(p)} G_1^\top Q^\top G_2^\top$ has the same
    diagonal entries as $A$ and eigenvalues being $\tLambda$. We verify
    that $\tB$ is close to $A$,
    \begin{equation*}
        \begin{split}
            \fnorm{\tB - A} &
            = \fnorm{G_2 Q G_1 \tLambda^{(p)} G_1^\top Q^\top G_2^\top
            - Q \Lambda Q^\top} \\
            & \leq \fnorm{Q G_1 \tLambda^{(p)} G_1^\top Q^\top
            - Q \Lambda Q^\top} + O(\veps^{1/2}) \\
            & = \fnorm{G_1 \tLambda^{(p)} G_1^\top
            - \Lambda} + O(\veps^{1/2}) \\
            & \leq \fnorm{\tLambda^{(p)}
            - \Lambda} + O(\veps^{1/2}) \\
            & \leq \fnorm{\tLambda^{(p)}
            - \tLambda} + \fnorm{\tLambda - \Lambda} + O(\veps^{1/2}) = O(\veps^{1/2}),
        \end{split}
    \end{equation*}
    where the first and second inequalities are due to the fact that $G_1$
    and $G_2$ are close to the identity matrix with $\fnorm{G_i -
    I}=O(\veps^{1/2})$ for $i=1, 2$, the second equality is due to the
    unitary invariant property of the Frobenius norm, the last inequality
    is due to the norm triangular inequality, and the last equality is due
    to the definition of $\tLambda$ and \eqref{eq:tLambda-block-estimate}.
\end{proof}

\section{Schur-Horn Continuity of Hermitian Matrices} \label{sec:SHC_Hermitian}

Next, we discuss the Schur-Horn continuity of a Hermitian matrix, as
described in \cref{thm:continuitySH_hermit}. The proof follows a similar
approach as that for symmetric matrices, with an extra step to
generalize \cref{lem:givens} to its complex counterpart.

Denote $\ri=\sqrt{-1}$, we introduce the complex Givens rotation defined as   
\begin{equation} \label{eq:comp_givens}
    G=\begin{bmatrix}
        \re^{\ri\phi}\cos\theta & \re^{\ri\psi}\sin\theta \\
        -\re^{-\ri\psi}\sin\theta & \re^{-\ri\phi}\cos\theta
    \end{bmatrix}            
\end{equation}
where $\theta, \phi, \psi \in \bbR$.

\begin{lemma} \label{lem:givens_complex}
    Given $\veps >0$ small enough and $d_1, d_2 \in \bbR$. Let a
    Hermitian matrix $B$ of form,
    \begin{equation*}
        B = \begin{bmatrix}
            b_{11} & b_{12} \\
            b_{21} & b_{22}
        \end{bmatrix} = \begin{bmatrix}
            d_1 - f(\veps) & b_{12} \\
            b_{12}^* & d_2 + g(\veps)
        \end{bmatrix},
    \end{equation*}
    with $f(\veps) = \Theta(\veps^\alpha),
    g(\veps)=\Theta(\veps^{\beta})$ for $\alpha, \beta > 0$. Further, we
    assume that
    \begin{equation*}
        |b_{12}|^2 + f(\veps)(d_2 - d_1 + g(\veps)) \geq 0.
    \end{equation*}
    Then there exists a complex Givens rotation $G$ with rotation angle
    $\theta = \Theta(\veps^\gamma)$ and $\phi, \psi \in \bbR$ such that
    the $(1, 1)$ entry of $\tB=GBG^*$ is $\tilde{b}_{11}=d_1$ and
    $\fnorm{\tB - B}=O(\veps^{\delta})$ where various scenarios of
    $\gamma$ and $\delta$ are provided in \cref{tab:givens_complex}.
    
    \begin{table}[htbp]
        \centering
        \begin{tabular}{lllcc}
            \toprule
            \multicolumn{3}{c}{Various Scenarios} & $\gamma$ & $\delta$ \\
            \toprule
            $b_{12} \neq 0$ & & & $\alpha$ & $\alpha$ \\
            \midrule
            $b_{12} = 0$ & $d_1 \neq d_2$ & & $\alpha / 2$ & $\alpha / 2$ \\
            \midrule
            $b_{12} = 0$ & $d_1 = d_2$ & $\alpha > \beta$ & $(\alpha-\beta)/2$ & $(\alpha+\beta)/2$ \\
            \midrule
            $b_{12} = 0$ & $d_1 = d_2$ & $\alpha \leq \beta$ & $0$ & $\alpha$ \\
            \bottomrule
        \end{tabular}
        \caption{Various scenarios of $b_{12}$, $d_1$, $d_2$, $\gamma$, and
        $\delta$ for \cref{lem:givens_complex}.}
        \label{tab:givens_complex}
    \end{table}
\end{lemma}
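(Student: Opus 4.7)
The plan is to leverage the extra phase degrees of freedom $\phi, \psi$ in the complex Givens rotation~\eqref{eq:comp_givens} to rotate the off-diagonal $b_{12}$ to its absolute value, thereby reducing everything to the real quadratic analysis of \cref{lem:givens}. First I would write $b_{12} = |b_{12}|\re^{\ri\theta_{12}}$ (when nonzero) and compute, using $b_{21} = \cg{b_{12}}$, $c=\cos\theta$, $s=\sin\theta$,
\begin{equation*}
    \tilde{b}_{11} = c^2 b_{11} + s^2 b_{22} + 2cs|b_{12}|\cos(\phi - \psi + \theta_{12}).
\end{equation*}
Choosing $\phi - \psi = -\theta_{12}$ (for instance $\phi = 0$, $\psi = \theta_{12}$) forces the cross term to equal $2cs|b_{12}|$, so setting $\tilde{b}_{11} = d_1$ gives the real quadratic
\begin{equation*}
    (b_{22} - d_1)\, t^2 + 2|b_{12}|\, t + (b_{11} - d_1) = 0, \qquad t = \tan\theta,
\end{equation*}
which is identical to \eqref{eq:secondorder} with $b_{12}$ replaced by $|b_{12}|$; the assumed inequality on $|b_{12}|^2 + f(\veps)(d_2 - d_1 + g(\veps))$ ensures a real solution. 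When $b_{12} = 0$, I would simply set $\phi = \psi = 0$, so that $G$ reduces to a real Givens rotation and \cref{lem:givens} applies directly.

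Once the complex problem is reduced to a real quadratic in $t$, the four-scenario case analysis of \cref{lem:givens} transfers verbatim: the solutions $t$ have the same asymptotic orders as in \eqref{eq:quadratic_solution}, so the $\gamma$ column of \cref{tab:givens_complex} agrees with that of \cref{tab:givens}. For the Frobenius norm bounds, the critical observation is that the chosen $G$ still satisfies $\fnorm{G - I}^2 = 2(1-\cos\theta)^2 + 2\sin^2\theta = 4(1-\cos\theta) = \Theta(\theta^2)$, independent of the phase choices. Hence in the first scenario the same triangle inequality
\begin{equation*}
    \fnorm{\tB - B} \leq \fnorm{GB}\,\fnorm{\cg{G} - I} + \fnorm{G - I}\,\fnorm{B}
\end{equation*}
yields $\fnorm{\tB - B} = O(\veps^\alpha)$, while in the three $b_{12}=0$ scenarios the direct entrywise computation of $\tB - B$ (analogous to~\eqref{eq:entries_diff_case34}) carries over unchanged, since the phases drop out of $\tB$ whenever $b_{12} = 0$.

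The main obstacle I anticipate is pure bookkeeping: verifying that the phase choice $\phi - \psi = -\theta_{12}$ does not inflate the off-diagonal entries $\tilde{b}_{12}$ and $\tilde{b}_{21}$ of $\tB$ beyond what occurs in the real case. I would handle this by an explicit expansion of $\tilde{b}_{12}$ with the chosen phases, which reveals that $|\tilde{b}_{12}|$ admits the same expression in $c, s, |b_{12}|, b_{11}, b_{22}$ as the $(1,2)$ entry of \eqref{eq:tildeB}. This is precisely what is needed for the $\delta$ column of \cref{tab:givens_complex} to coincide with that of \cref{tab:givens}, completing the proof.
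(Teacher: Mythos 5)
Your proof is correct but takes a genuinely cleaner route than the paper's. Once $b_{12}\neq 0$, the paper's argument splits further into the sub-cases $\rep(b_{12})\neq 0$ and $\rep(b_{12})=0$, hard-codes a phase choice in each ($\phi=\psi=0$, or $\phi=\pi/2$, $\psi=0$, respectively), and then refers back to the real analysis with $b_{12}$ replaced by $\rep(b_{12})$ or $-\imp(b_{12})$. You instead write $b_{12}=|b_{12}|\re^{\ri\theta_{12}}$ and choose $\phi-\psi=-\theta_{12}$ once and for all, so that the $(1,1)$-entry equation becomes exactly the real quadratic \eqref{eq:secondorder} with $b_{12}$ replaced by $|b_{12}|$; the assumed discriminant inequality $|b_{12}|^2 + f(\veps)(d_2-d_1+g(\veps))\geq 0$ is then precisely what is needed, with no sub-case split. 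Your uniform reduction also sidesteps a latent issue in the paper's second sub-case: with $\phi=\pi/2$ the matrix $G$ converges to $\diag(\ri,-\ri)$ rather than $I$ as $\theta\to 0$, and conjugation by this diagonal phase flips the sign of $b_{12}$, so for the paper's explicit $G$ one has $\fnorm{\tB-B}\to 2\sqrt{2}\,|b_{12}|\neq 0$; the asserted bound $\fnorm{\tB-B}=O(\veps^\alpha)$ therefore does not hold for that choice (it can be repaired by absorbing $\diag(-\ri,\ri)$ into $G$, which is essentially what your phase alignment accomplishes).

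One small correction to your write-up. In general $\fnorm{G-I}^2 = 4 - 4\cos\theta\cos\phi$, so the norm does depend on $\phi$, though not on $\psi$. Your formula $\fnorm{G-I}^2 = 4(1-\cos\theta)=\Theta(\theta^2)$ holds precisely because you fix $\phi=0$; it is worth stating that the choice $\phi=0$, $\psi=\theta_{12}$ is what keeps $G$ near the identity, rather than calling the estimate ``independent of the phase choices.'' With that clarification, the remaining step — that the off-diagonal entry of $\tB$ equals $\re^{\ri\theta_{12}}\bigl(cs(b_{22}-b_{11})+(c^2-s^2)|b_{12}|\bigr)$, i.e.\ the real $(1,2)$ entry of \eqref{eq:tildeB} with $b_{12}$ replaced by $|b_{12}|$ up to a unit phase — is correct, and the $\gamma$, $\delta$ columns of \cref{tab:givens_complex} carry over from \cref{tab:givens} verbatim.
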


\begin{proof}
    Note that $B$ is a Hermitian matrix whose diagonal entries are all
    real numbers, if $b_{12}=0$, it reduces to \cref{lem:givens} and we
    have the conclusion for such scenarios directly. Below we assume
    that $b_{12}\neq 0$. Denote $b_{12}= \rep(b_{12}) +
    \ri\,\imp(b_{12})$ where $\rep(b_{12}) \in\bbR$ and $\imp(b_{12})\in
    \bbR$ are real part and imaginary part of $b_{12}$, respectively. We
    further split the discussion into two scenarios: (i) $\rep(b_{12})
    \neq 0$ and (ii) $\rep(b_{12}) = 0$.   

    First, if $\rep(b_{12}) \neq 0$, we still apply the real Givens
    rotation denoted as $G=\begin{bmatrix} c & s \\ -s & c \end{bmatrix}$ 
    with $c=\cos\theta$ and $s=\sin\theta$ to matrix $B$ and obtain, 
    \begin{equation*}
        G B G^* = \begin{bmatrix}
            c^2b_{11} + s^2b_{22} + 2cs\rep(b_{12}) & \omega \\
            \omega^* & c^2b_{22} + s^2b_{11} - 2cs\rep(b_{12}) 
        \end{bmatrix},
    \end{equation*}
    with $\omega=cs(b_{22}-b_{11}) + (c^2-s^2)\rep(b_{12}) +
    \ri\,\imp(b_{12})$. Equating the $(1,1)$ entry of $\tB=GBG^*$ and
    $d_1$ it leads to 
    \begin{equation*}
        c^2 b_{11} + s^2 b_{22} + 2cs \rep(b_{12}) = d_1, 
    \end{equation*}
    which is analogous to \eqref{eq:secondorderoriginal} in the proof of
    \cref{lem:givens} with $b_{12}$ replaced by $\rep(b_{12})$. Note
    that $\rep(b_{12})\neq 0$, adopting a similar analysis one concludes
    that $\gamma=\alpha$ and $\delta=\alpha$. 

    Second, if $\rep(b_{12})=0$, at this time from $b_{12}\neq 0$ we
    must have $\imp(b_{12})\neq 0$. Thus we consider another Givens
    rotation matrix denoted as $G=\begin{bmatrix} \ri c & s \\ -s & -\ri
    c \end{bmatrix}$, with $\phi=\frac{\pi}{2}$ and $\psi=0$. Applying
    $G$ from the left of $B$ and $G^*$ from the right of $B$ we get,
    \begin{equation*}
        G B G^* = \begin{bmatrix}
            c^2b_{11} + s^2b_{22} - 2cs\imp(b_{12}) & \omega \\
            \omega^* & c^2b_{22} + s^2b_{11} + 2cs\imp(b_{12}) 
        \end{bmatrix},
    \end{equation*}
    with $\omega=\ri cs(b_{22}-b_{11}) - \ri (c^2-s^2)\imp(b_{12}) -
    \rep(b_{12})$.   
    Equating the $(1,1)$ entry of $\tB=GBG^*$ and $d_1$ it leads to 
    \begin{equation*}
        c^2 b_{11} + s^2 b_{22} - 2cs \imp(b_{12}) = d_1,
    \end{equation*}
    which is analogous to \eqref{eq:secondorderoriginal} in the proof of
    \cref{lem:givens} with $b_{12}$ replaced by $-\imp(b_{12})$. Note that
    $\imp(b_{12})\neq 0$, adopting a similar analysis one concludes that
    $\gamma=\alpha$ and $\delta=\alpha$. 
\end{proof}

Now, we generalize the definition of strong Schur-Horn continuity for
symmetric matrices to Hermitian matrices, by simply replacing orthogonal
matrices in \cref{def:strongSHcont} with unitary matrices.

\begin{definition}[Strong Schur-Horn Continuity for Hermitian Matrices]
    Suppose $A \in \bbC^{n \times n}$ is a Hermitian matrix with an
    eigendecomposition $A = Q \Lambda Q^*$, where $Q$ is the unitary
    eigenvector matrix and $\Lambda$ is the diagonal eigenvalue matrix.
    Matrix $A$ is strongly Schur-Horn continuous if, for any perturbed
    eigenvalues $\tLambda$ satisfying $\tr(\tLambda) = \tr(\Lambda)$ and
    $\fnorm{\tLambda - \Lambda} = O(\veps)$ for $\veps > 0$ sufficiently
    small, there exists a Hermitian matrix $\tB = G_2 Q G_1 \tLambda
    G_1^* Q^* G_2^*$ such that
    \begin{enumerate}
        \item $\diag(\tB)=\diag(A)$,
        \item $G_1$ and $G_2$ are unitary matrices, and
        \item $\fnorm{G_i - I} = O(\veps^{1/2})$ for
        $i = 1, 2$.
    \end{enumerate}
\end{definition}

With \cref{lem:givens_complex}, one can verify that the desired properties
of strong Schur-Horn continuity appeared in \cref{sec:strongSHcont} also
hold for Hermitian matrices. Thus, by employing a similar analysis, one
can prove \cref{thm:continuitySH_hermit} and establish the Schur-Horn
continuity of Hermitian matrices, and we omit the details.

We also have the proposition for strict majorization conditions and strong
Schur-Horn continuity of Hermitian matrices.

\begin{proposition} \label{prop:sSH_major_hermit}
    If $A\in \bbC^{n\times n}$ is a Hermitian matrix satisfying the strict
    majorization condition \eqref{eq:strict-major}, then $A$ is strongly
    Schur-Horn continuous.
\end{proposition}

\section{Conclusion} \label{sec:conclusion}

In this paper, we explore the eigenvalue perturbation of a symmetric
(Hermitian) matrix with fixed diagonals, which is referred to as the
continuity of the Schur-Horn mapping. We first establish the Schur-Horn
continuity for real diagonal matrices leveraging Givens rotation and
majorization relations between diagonals and eigenvalues. Then, we
introduce the concept of the strong Schur-Horn continuity, which is a
stronger version of Schur-Horn continuity. This allows us to construct a
block-wise majorization relation and prove the Schur-Horn continuity for
general symmetric matrices. Additionally, our analysis could be extended
to Hermitian matrices to establish their Schur-Horn continuity.

\section*{Acknowledgments}

This work is supported in part by the National Natural Science Foundation
of China (12271109) and Shanghai Pilot Program for Basic Research - Fudan
University 21TQ1400100 (22TQ017).

\appendix

\section{Proof of \cref{prop:strongSH-major}}
\label{app:strongSH-major-proof}

\begin{proof}
    We prove by contrapositive. If $A$ is not a scalar matrix and does not
    satisfy \eqref{eq:strict-major}, then we have either (i) $A$ not
    satisfying the majorization relation \eqref{eq:major}; or (ii) $A$
    satisfying the majorization relation, but there exists $1 \leq i, j <
    n$ such that,
    \begin{equation*}
        \lambda_1 + \cdots + \lambda_i = d_1 + \cdots + d_i,
        \quad \text{and} \quad
        \lambda_1 + \cdots + \lambda_j < d_1 + \cdots + d_j.
    \end{equation*}

    In case (i), $A$ does not satisfy the majorization relation. Hence,
    by Schur-Horn theorem, $A$ is not strongly Schur-Horn continuous.

    In case (ii), we have $\lambda_1 < \lambda_n$. The discussion is
    further split into two scenarios: (ii.1) $\lambda_i < \lambda_n$ and
    (ii.2) $\lambda_i = \lambda_n$.
    
    In (ii.1), we denote those eigenvalues equal to $\lambda_i$ and
    $\lambda_n$ as follows,
    \begin{equation*}
        \lambda_\ell < \lambda_{\ell+1} = \cdots = \lambda_i = \cdots
        = \lambda_r < \lambda_{r+1}, 
        \quad \text{and} \quad \lambda_k < \lambda_{k+1} = \cdots = \lambda_n,
    \end{equation*}
    where $0 \leq \ell < i \leq r \leq k < n$. Then we consider a
    particular perturbation by adding an $\veps > 0$ small enough to those
    eigenvalues equal to $\lambda_i$ and subtracting $\frac{r-\ell}{n-k}
    \veps$ from those eigenvalues equal to $\lambda_n$, i.e., 
    \begin{multline*}
        \tlambda = \Bigl( 
            \lambda_1, \cdots, \lambda_{\ell}, 
            \lambda_{\ell+1} + \veps, \cdots, \lambda_i + \veps, \cdots,
            \lambda_{r} + \veps, \\ 
            \lambda_{r+1}, \cdots, \lambda_{k},
            \lambda_{k+1} - \frac{r-\ell}{n-k}\veps, \cdots, \lambda_n
            - \frac{r-\ell}{n-k}\veps 
            \Bigl).
    \end{multline*}
    This perturbed $\tLambda = \diag(\tlambda)$ satisfies
    $\tr(\tLambda)=\tr(\Lambda)$ and $\fnorm{\tLambda - \Lambda}=O(\veps)$.
    Furthermore, $\tlambda$ is still in a non-decreasing order for
    $\veps>0$ sufficiently small. While, the $i$-th majorization relation
    between $\tlambda$ and $d$ is violated,
    \begin{equation*}
        \tlambda_1 + \cdots + \tlambda_i = \lambda_1 + \cdots + \lambda_i
        + (i-\ell)\veps = d_1 + \cdots + d_i + (i-\ell)\veps
        > d_1 + \cdots + d_i.
    \end{equation*}
    By Schur-Horn theorem, there does not exist a matrix with diagonal and
    eigenvalues being $d$ and $\tlambda$, respectively. Hence, $A$ is not
    strongly Schur-Horn continuous.
    
    In (ii.2), we have $\lambda_1 < \lambda_i = \lambda_n$. Denote those
    eigenvalues equal to $\lambda_1$ and $\lambda_i$ as follows,
    \begin{equation*}
        \lambda_1 = \cdots = \lambda_r < \lambda_{r+1} 
        \quad \text{and} \quad 
        \lambda_k < \lambda_{k+1} = \cdots = \lambda_i = \cdots = \lambda_n,
    \end{equation*}
    where $1 \leq r \leq k < n$. Consider a particular perturbation by
    subtracting $\veps > 0$ from those eigenvalues equal to $\lambda_i$
    and adding $\frac{n-k}{r}\veps$ to those eigenvalues equal to
    $\lambda_1$ as follows,
    \begin{equation*}
        \tlambda = 
        \left( \lambda_1 + \frac{n-k}{r}\veps, \cdots,
        \lambda_r + \frac{n-k}{r}\veps,  
        \lambda_{r+1}, \cdots, \lambda_k,
        \lambda_{k+1} - \veps, \cdots, 
        \lambda_n - \veps \right).
    \end{equation*}
    This perturbed $\tLambda = \diag(\tlambda)$ satisfies $\tr(\tLambda) =
    \tr(\Lambda)$ and $\fnorm{\tLambda - \Lambda}=O(\veps)$. Furthermore,
    $\tlambda$ is in a non-decreasing order when $\veps>0$ is sufficiently
    small. Similarly, the $i$-th majorization relation between $\tlambda$
    and $d$ is violated,
    \begin{multline*}
        \tlambda_1 + \cdots + \tlambda_i = \lambda_1 + \cdots + \lambda_i
        + r \cdot \frac{n-k}{r}\veps - (i-k) \veps \\
        = d_1 + \cdots + d_i + (n-i)\veps > d_1 + \cdots + d_i.
    \end{multline*}
    By Schur-Horn theorem, there does not exist a matrix with diagonal and
    eigenvalues being $d$ and $\tlambda$, respectively. Hence, $A$ is not
    strongly Schur-Horn continuous.
\end{proof}

\section{Spectrum Window for Irreducible Matrices}

\begin{lemma} \label{lem:irreducible_eig_gap}
    If $A$ is an irreducible symmetric matrix whose dimension is strictly
    greater than 1, then $\lmin(A) < \lmax(A)$. 
\end{lemma}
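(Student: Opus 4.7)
The plan is to prove the contrapositive, or equivalently argue by contradiction: suppose $\lmin(A) = \lmax(A)$ and show $A$ cannot be irreducible. Since $A$ is real symmetric, it admits a spectral decomposition $A = Q \Lambda Q^\top$ with $Q$ orthogonal and $\Lambda = \diag(\lambda_1, \ldots, \lambda_n)$. If $\lmin(A) = \lmax(A) = c$, then all eigenvalues coincide, so $\Lambda = c I$, and therefore $A = Q(cI)Q^\top = cI$.

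Now recall that a symmetric matrix $A$ is irreducible if and only if its underlying undirected graph, whose edges correspond to the nonzero off-diagonal entries of $A$, is connected. The matrix $cI$ has every off-diagonal entry equal to $0$, so its underlying graph consists of $n$ isolated vertices. When $n > 1$ this graph is disconnected, contradicting the irreducibility of $A$. Hence the assumption $\lmin(A) = \lmax(A)$ fails, and since $\lmin(A) \leq \lmax(A)$ always holds, we conclude $\lmin(A) < \lmax(A)$.

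The argument is short and the only conceptual step is the equivalence between irreducibility of a symmetric matrix and connectivity of its nonzero-pattern graph, which is standard and is in fact already used in the proof of \cref{lem:irreducible_case} earlier in the paper. There is no real obstacle; the proof is essentially a one-line observation that a scalar multiple of the identity is as reducible as possible.
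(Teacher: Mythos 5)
Your proof is correct and follows exactly the same approach as the paper's: use the spectral decomposition to conclude $A = cI$ and then observe this contradicts irreducibility. The only difference is that you spell out explicitly why $cI$ fails to be irreducible via the graph-connectivity characterization, whereas the paper treats that step as immediate.
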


\begin{proof}
    Consider the eigendecomposition $A = Q \Lambda Q^\top$ of the
    symmetric matrix $A$, where $Q$ is the orthonormal matrix composed of
    the eigenvectors of $A$, and $\Lambda$ is the diagonal eigenvalue
    matrix. Suppose $\lmin(A) = \lmax(A) = \lambda$, then we have $A = Q
    \cdot \lambda I \cdot Q^\top = \lambda I$, which contradicts the
    irreducibility of $A$.
\end{proof}

\section{Application in qOMM Landscape Analysis}
\label{app:qomm_application}

In this section, we demonstrate the utility of Schur-Horn continuity for
landscape analysis by applying it to the quantum orbital minimization
method (qOMM). The qOMM corresponds to the optimization problem
\begin{equation*}
    \min_{X \in \ob(n,p)} E_0(X) = \tr\left( (2I - X^* X ) X^* A X \right),
\end{equation*}
where $A$ is a negative definite Hermitian matrix. To verify that a
stationary point $X$ is not a local minimizer, one must construct a
descent path $X(\veps)$ on the oblique manifold $\ob(n,p)$ in its
neighborhood. The strong Schur-Horn continuity theorem guarantees the
existence of such a path.

Denoting the Lagrange multipliers by a diagonal matrix $D = \diag(d_1,
\dots, d_p)$, the Lagrangian function for the qOMM problem is
\begin{equation*}
    L(X, D) = \tr \left( (2I - X^*X) X^*AX \right)
    + \tr(D^\top (X^*X - I)).
\end{equation*}
The corresponding first-order optimality condition are
\begin{subnumcases}{ \label{eq:Lagrange_qomm_zero_grad}} 
    2AX - AXX^*X - XX^*AX + XD = 0, \label{eq:Lagrange_qomm_zero_grad_X} \\
    X \in \ob(n,p). \label{eq:Lagrange_qomm_zero_grad_D}
\end{subnumcases}
From the first-order condition we could figure out all the stationary
points of qOMM. For illustrative purposes, we restrict our attention to
a particular class of stationary points. Let $Q_p \in \bbC^{n\times p}$
be the matrix whose columns are the eigenvectors of $A$ corresponding to
eigenvalues $\Lambda_p=\diag(\lambda_1, \cdots, \lambda_p)$. We consider
a rank-deficient stationary point $X \in \ob(n,p)$ with rank $p-1$ and
the corresponding Lagrange multiplier $D$ defined as follows. Let 
\begin{equation*}
    X = Q_p \Sigma V^*, \quad D = d I, \quad 
    d = \frac{2}{\sum_{i=1}^{p-1} \lambda_i^{-1}} < 0,
\end{equation*}
where
\begin{equation*}
    \Sigma=\diag(\sigma_1, \cdots, \sigma_{p-1}, \sigma_p), \quad 
    \sigma_i = \sqrt{1 + \frac{d}{2 \lambda_i}}, \quad i=1, \ldots, p-1, \quad \sigma_p = 0,
\end{equation*}
and $V$ is a unitary matrix such that $X \in \ob(n,p)$. The existence of
$V$ is guaranteed by the Schur-Horn theorem since $s:=\diag(\Sigma^2)$
is strictly majorized by $\mathbf{1}$, which follows from $\sigma_i > 1$
for $i=1, \ldots, p-1$. One could verify that $(X, D)$ satisfies the
first-order condition \eqref{eq:Lagrange_qomm_zero_grad} by direct
substitution.

We now construct a descent path $X(\veps) \in \ob(n,p)$ in the
neighborhood of $X$ such that for sufficiently small $\veps > 0$,
$E_0(X(\veps)) < E_0(X)$. Note that the objective function at the
stationary point $X=Q_p \Sigma V^*$ takes the value
\begin{equation*}
    E_0(X) = \tr\left( (2I - X^* X ) X^* A X \right) 
    = \sum_{i=1}^p \left( 2 - \sigma_i^2 \right) \sigma_i^2 \lambda_i.
\end{equation*}
Thus, we construct a perturbed point $\tX = Q_p \tSigma \tV^*$ such that
\begin{equation*}
    \tSigma^2 = (1 - \veps)\Sigma^2 + \veps I,
\end{equation*}
where $\veps > 0$ is sufficiently small and $\tV$ is a unitary matrix to
be determined. Let $H := X^*X = V \Sigma^2 V^*$ and $s :=
\diag(\Sigma^2)$. Since $s$ is strictly majorized by $\mathbf{1}$,
\cref{prop:sSH_major_hermit} implies that $H$ is strongly Schur-Horn
continuous. Moreover, since $\tr(\tSigma^2)=\tr(\Sigma^2)$ and
$\fnorm{\tSigma^2 - \Sigma^2} = O(\veps)$, there exists
\begin{equation*}
    \tH = G_2 V G_1 \tSigma^2 G_1^* V^* G_2^*
\end{equation*}
such that $\diag(\tH)=\diag(H)=\mathbf{1}$, $G_i$ is unitary, and
$\fnorm{G_i - I} = O(\veps^{1/2})$ for $i = 1, 2$. Let $\tV = G_2 V
G_1$. Then the distance between $V$ and $\tV$ is bounded by
\begin{equation*}
    \fnorm{\tV - V} = \fnorm{G_2 V G_1 - G_2 V + G_2 V - V} \leq
    \fnorm{G_1 - I} + \fnorm{G_2 - I} = O(\veps^{1/2}),
\end{equation*}
where we use the unitary invariance of Frobenius norm. Hence, we know
that $\tX \in \ob(n,p)$ since $\diag(\tX^* \tX) = \diag(\tH) =
\mathbf{1}$. Moreover, the distance between $X$ and $\tX$ could be
bounded as,
\begin{equation*}
    \fnorm{\tX - X} = \fnorm{Q_p \tSigma \tV^* - Q_p \Sigma V^*}
    \leq \fnorm{\tSigma - \Sigma}
    + \fnorm{\Sigma} \fnorm{\tV^* - V^*} = O(\veps^{1/2}).
\end{equation*}
Finally, for $\veps > 0$ sufficiently small, we have
\begin{equation*}
    E_0(\tX) - E_0(X) = \sum_{i=1}^{p-1} \lambda_{i}(2 - \veps) \veps(\sigma_{i}^2 -1)^2
    + \lambda_{p}(2 - \veps) \veps < 0,
\end{equation*}
since $\lambda_i < 0$ for $i=1, \ldots, p$. Thus, for sufficiently small
$\veps > 0$, there exists $\tX \in \ob(n,p)$ such that $\fnorm{\tX - X}
= O(\veps^{1/2})$ and $E_0(\tX) < E_0(X)$. Such rank-deficient
stationary points could not be local minimizers.

\bibliographystyle{siam}
\bibliography{continuitySH}

\begin{thebibliography}{10}

\bibitem{bierman2022quantum}
{\sc J.~Bierman, Y.~Li, and J.~Lu}, {\em Quantum orbital minimization method
  for excited states calculation on a quantum computer}, Journal of Chemical
  Theory and Computation, 18 (2022), pp.~4674--4689.

\bibitem{candogan2018finding}
{\sc U.~O. Candogan and V.~Chandrasekaran}, {\em Finding planted subgraphs with
  few eigenvalues using the schur--horn relaxation}, SIAM Journal on
  Optimization, 28 (2018), pp.~735--759.

\bibitem{CHAN1983562}
{\sc N.~Chan and K.-H. Li}, {\em Diagonal elements and eigenvalues of a real
  symmetric matrix}, Journal of Mathematical Analysis and Applications, 91
  (1983), pp.~562--566.

\bibitem{chen2025landscape}
{\sc H.~Chen, Y.~Li, B.~Lu, and J.~Lu}, {\em Landscape analysis of excited
  states calculation over quantum computers}, arXiv preprint arXiv:2512.16539,
  (2025).

\bibitem{chu1995construct}
{\sc M.~T. Chu}, {\em Constructing a hermitian matrix from its diagonal entries
  and eigenvalues}, SIAM Journal on Matrix Analysis and Applications, 16
  (1995), pp.~207--217.

\bibitem{davies2000numerically}
{\sc P.~I. Davies and N.~J. Higham}, {\em Numerically stable generation of
  correlation matrices and their factors}, BIT Numerical Mathematics, 40
  (2000), pp.~640--651.

\bibitem{dhillon2005generalized}
{\sc I.~S. Dhillon, R.~W. Heath~Jr, M.~A. Sustik, and J.~A. Tropp}, {\em
  Generalized finite algorithms for constructing hermitian matrices with
  prescribed diagonal and spectrum}, SIAM Journal on Matrix Analysis and
  Applications, 27 (2005), pp.~61--71.

\bibitem{fickus2013constructing}
{\sc M.~Fickus, D.~G. Mixon, M.~J. Poteet, and N.~Strawn}, {\em Constructing
  all self-adjoint matrices with prescribed spectrum and diagonal}, Advances in
  Computational Mathematics, 39 (2013), pp.~585--609.

\bibitem{gao2022triofm}
{\sc W.~Gao, Y.~Li, and B.~Lu}, {\em Triangularized orthogonalization-free
  method for solving extreme eigenvalue problems}, Journal of Scientific
  Computing, 93 (2022), p.~63.

\bibitem{gao2023wtpm}
{\sc W.~Gao, Y.~Li, and H.~Shen}, {\em Weighted trace-penalty minimization for
  full configuration interaction}, 2023.

\bibitem{horn1954doubly}
{\sc A.~Horn}, {\em Doubly stochastic matrices and the diagonal of a rotation
  matrix}, American Journal of Mathematics, 76 (1954), pp.~620--630.

\bibitem{LEITE1999149}
{\sc R.~Leite, T.~Richa, and C.~Tomei}, {\em Geometric proofs of some theorems
  of schur-horn type}, Linear Algebra and its Applications, 286 (1999),
  pp.~149--173.

\bibitem{li2019coord}
{\sc Y.~Li, J.~Lu, and Z.~Wang}, {\em Coordinatewise descent methods for
  leading eigenvalue problem}, SIAM Journal on Scientific Computing, 41 (2019),
  pp.~A2681--A2716.

\bibitem{liu2015efficient}
{\sc X.~Liu, Z.~Wen, and Y.~Zhang}, {\em An efficient gauss--newton algorithm
  for symmetric low-rank product matrix approximations}, SIAM Journal on
  Optimization, 25 (2015), pp.~1571--1608.

\bibitem{LU201787}
{\sc J.~Lu and K.~Thicke}, {\em Orbital minimization method with l1
  regularization}, Journal of Computational Physics, 336 (2017), pp.~87--103.

\bibitem{mari2014quantum}
{\sc A.~Mari, V.~Giovannetti, and A.~S. Holevo}, {\em Quantum state
  majorization at the output of bosonic gaussian channels}, Nature
  communications, 5 (2014), p.~3826.

\bibitem{mauri1993orbital}
{\sc F.~Mauri, G.~Galli, and R.~Car}, {\em Orbital formulation for
  electronic-structure calculations with linear system-size scaling}, Physical
  Review B, 47 (1993), p.~9973.

\bibitem{ordejon1993unconstrained}
{\sc P.~Ordej{\'o}n, D.~A. Drabold, M.~P. Grumbach, and R.~M. Martin}, {\em
  Unconstrained minimization approach for electronic computations that scales
  linearly with system size}, Physical Review B, 48 (1993), p.~14646.

\bibitem{schur1923uber}
{\sc I.~Schur}, {\em Uber eine klasse von mittelbildungen mit anwendungen auf
  die determinantentheorie}, Sitzungsberichte der Berliner Mathematischen
  Gesellschaft, 22 (1923), p.~51.

\bibitem{styliaris2019mixing}
{\sc G.~Styliaris, {\'A}.~M. Alhambra, and P.~Zanardi}, {\em Mixing of quantum
  states under markovian dissipation and coherent control}, Physical Review A,
  99 (2019), p.~042333.

\bibitem{viroelementary}
{\sc O.~Y. Viro, O.~Ivanov, N.~Y. Netsvetaev, and V.~Kharlamov}, {\em
  Elementary topology problem textbook}.

\bibitem{wen2016trace}
{\sc Z.~Wen, C.~Yang, X.~Liu, and Y.~Zhang}, {\em Trace-penalty minimization
  for large-scale eigenspace computation}, Journal of Scientific Computing, 66
  (2016), pp.~1175--1203.

\bibitem{zha1995note}
{\sc H.~Zha and Z.~Zhang}, {\em A note on constructing a symmetric matrix with
  specified diagonal entries and eigenvalues}, BIT Numerical Mathematics, 35
  (1995), pp.~448--452.

\end{thebibliography}

\end{document}